\newtheorem{lem}{Lemma}
\newtheorem{theorem}{Theorem}
\newtheorem{assump}{Assumption}
\newcommand{\vast}{\bBigg@{4}}
\newcommand{\Vast}{\bBigg@{5}}
\def\mbb{\mathbb}
\def\mb{\mathbf}
\def\mc{\mathcal}
\def\ul{\underline}
\def\bs{\boldsymbol}
\begin{document}
\title{\bf\LARGE ADD-OPT: Accelerated Distributed Directed Optimization} 
\author{Chenguang Xi,~\emph{Student Member,~IEEE}, Ran Xin,~\emph{Student Member,~IEEE}, \\and Usman A. Khan,~\emph{Senior Member,~IEEE}
\thanks{
The authors are with the ECE Department at Tufts University, Medford, MA; {\texttt{chenguang.xi@tufts.edu, khan@ece.tufts.edu}}. This work has been partially supported by an NSF Career Award \# CCF-1350264.}
}
	
\maketitle
\begin{abstract}
In this paper, we consider distributed optimization problems where the goal is to minimize a sum of objective functions over a multi-agent network. We focus on the case when the inter-agent communication is described by a strongly-connected, \emph{directed} graph. The proposed algorithm, ADD-OPT (Accelerated Distributed Directed Optimization), achieves the best known convergence rate for this class of problems,~$O(\mu^{k}),0<\mu<1$, given strongly-convex, objective functions with globally Lipschitz-continuous gradients, where~$k$ is the number of iterations. Moreover, ADD-OPT supports a wider and more realistic range of step-sizes in contrast to existing work. In particular, we show that ADD-OPT converges for arbitrarily small (positive) step-sizes. Simulations further illustrate our results.
\end{abstract}
	
	\begin{IEEEkeywords}
		Distributed optimization, directed graph, linear convergence, DEXTRA.
	\end{IEEEkeywords}

\vspace{-0.2cm}
\section{Introduction}\label{s1}
In this paper, we consider distributed optimization problems where the goal is to minimize a sum of objective functions over a multi-agent network. Formally, we consider a decision variable,~$\mb{z}\in\mbb{R}^p$, and a strongly-connected network containing~$n$ agents, where each agent,~$i$, only has access to a local objective function,~$f_i:\mbb{R}^p\rightarrow\mbb{R}$. The goal is to have each  agent minimize the sum of objectives,~$\sum_{i=1}^nf_i(\mb{z})$, via information exchange with the neighbors. This formulation has gained great interest due to its widespread applications in, e.g., large-scale machine learning,~\cite{distributed_Boyd,ml}, model-predictive control,~\cite{distributed_Necoara}, cognitive networks,~\cite{distributed_Mateos,distributed_Bazerque}, source localization,~\cite{distributed_Rabbit,distributed_Khan}, resource scheduling,~\cite{distributed_Chunlin}, and message routing,~\cite{distributed_Neglia}.

Most of the existing algorithms assume information exchange over undirected networks (graphs), where the communication between the agents is bidirectional, i.e., if agent~$i$ sends information to agent~$j$ then agent~$j$ can also send information to agent~$i$. Related work includes Distributed Gradient Descent (DGD),~\cite{uc_Nedic,cc_nedic,cc_Lobel2,cc_Ram2}, which achieves~$O(\frac{\ln k}{\sqrt{k}})$ convergence for arbitrary convex functions, and~$O(\frac{\ln k}{k})$ for strongly-convex functions, where~$k$ is the number of iterations. The convergence rates can be accelerated with an additional Lipschitz-continuity assumption on  the associated gradient. For example, see DGD~\cite{DGD_Yuan} that converges at~$O(\frac{1}{k})$ for general convex functions but within a ball around the optimal solution, whereas, it converges linearly to the optimal solution for strongly-convex functions. The distributed Nestrov's method,~\cite{fast_Gradient}, converges at~$O(\frac{\ln k}{k^2})$ for general convex functions. Of significant relevance is EXTRA,~\cite{EXTRA}, which converges to the optimal solution at~$O(\frac{1}{k})$ for general convex functions and is linear for strongly-convex functions. The work in \cite{Augmented_EXTRA} improves EXTRA by relaxing the weight matrices to be asymmetric. Besides the gradient-based methods, the distributed implementation of ADMM,~\cite{ADMM_Mota,ADMM_Wei, ADMM_Shi}, has also been considered over undirected graphs.

The aforementioned methods,~\cite{uc_Nedic,cc_nedic,cc_Lobel2,cc_Ram2,DGD_Yuan,fast_Gradient,EXTRA,Augmented_EXTRA,ADMM_Mota,ADMM_Wei, ADMM_Shi}, are applicable to undirected graphs that allow the use of doubly-stochastic weight matrices; row-stochasticity guarantees that all agents reach consensus, while the column-stochasticity ensures that each local gradient contributes equally to the global objective,~\cite{D-DGD}. On the contrary, when the underlying graph is directed, the weight matrix may only be row-stochastic or column-stochastic but not both. In this paper, we provide a distributed optimization algorithm that does not require doubly-stochastic weights and thus is applicable to directed graphs (digraphs). See~\cite{GHARESIFARD2012539,7100912} for work on balancing the weights in strongly-connected digraphs.

Optimization in continuous-time over weight-balanced digraphs has been studied earlier in~\cite{weightbalanceddigraphs1,weightbalanceddigraphs2}. Existing discrete-time algorithms include the following: Gradient-Push (GP), \cite{opdirect_Nedic,opdirect_Tsianous,opdirect_Tsianous2,opdirect_Tsianous3}, that combines DGD, \cite{uc_Nedic}, and push-sum consensus,~\cite{ac_directed0,ac_directed}; Directed-Distributed Gradient Descent (D-DGD),~\cite{D-DGD,D-DPS}, which uses Cai and Ishii's work on surplus consensus,~\cite{ac_Cai1}, and combines it with DGD; and~\cite{opdirect_Makhdoumi}, where the authors apply the weight-balancing technique,~\cite{c_Hooi-Tong}, to DGD. These gradient-based methods,~\cite{opdirect_Nedic,opdirect_Tsianous,opdirect_Tsianous2,opdirect_Tsianous3,D-DGD,D-DPS,opdirect_Makhdoumi}, restricted by the diminishing step-size, converge relatively slowly at~$O(\frac{\ln k}{\sqrt{k}})$. When the objective functions are strongly-convex, the convergence rate can be accelerated to~$O(\frac{\ln k}{k})$,~\cite{opdirect_Nedic3}.


A recent paper proposed a fast distributed algorithm, termed DEXTRA, \cite{DEXTRA,the_copy_work}, to solve the distributed consensus optimization problem over directed graphs. By combining the push-sum protocol, \cite{ac_directed0,ac_directed}, and EXTRA,~\cite{EXTRA}, DEXTRA achieves a linear convergence rate given that the objective functions are strongly-convex. However, a limitation of DEXTRA is a restrictive step-size range, i.e., the greatest lower bound of DEXTRA's step-size is strictly greater than zero. In particular, DEXTRA requires the step-size,~$\alpha$, to follow~$\alpha\in(\underline{\alpha},\overline{\alpha})$, where~$\underline{\alpha}>0$. Estimating~$\underline{\alpha}$ in a distributed setting is challenging because it may require global knowledge. In contrast if~$\underline{\alpha}=0$, agents can pick a small enough positive constant to ensure the convergence. In this paper, we propose ADD-OPT (Accelerated Distributed Directed Optimization) to address the step-size limitation inherent to DEXTRA. In particular, ADD-OPT's step-size follows~$\alpha\in(0,\overline{\alpha})$, i.e.,~$\underline{\alpha}=0$, ensuring that the lower bound of ADD-OPT's step-size does not require any global knowledge. We show that ADD-OPT converges linearly for strongly-convex functions.

The remainder of the paper is organized as follows. Section~\ref{s2} formulates the problem and describes ADD-OPT. We also present appropriate assumptions in Section \ref{s2}. Section~\ref{s3} states the main convergence results. In Section~\ref{s4}, we present some lemmas as the basis of the proof of ADD-OPT's convergence. The proof of main results is provided in Section~\ref{s5}. We show numerical results in Section~\ref{s6} and Section~\ref{s7} contains the concluding remarks.

\textbf{Basic Notation:} We use lowercase bold letters to denote vectors and uppercase italic letters to denote matrices. The matrix,~$I_n$, represents the~$n\times n$ identity;~$\mb{1}_n$ and~$\mb{0}_n$ are the~$n$-dimensional column vectors of all~$1$'s and~$0$'s, respectively. We denote by~$A\otimes B$, the Kronecker product of two matrices,~$A$ and~$B$. For any~$f(\mb{x})$,~$\nabla f(\mb{x})$ denotes the gradient of~$f$ at~$\mb{x}$. The spectral radius of a matrix,~$A$, is represented by~$\rho(A)$. \begin{color}{black}For an irreducible, column-stochastic matrix,~$A$, we denote its right and left eigenvectors corresponding to the eigenvalue of~$1$ by $\bs{\pi}$ and~$\mb{1}_n^\top$, respectively, such that~$\mb{1}_n^\top\bs{\pi} = 1$. Depending on its argument, we denote~$\|\cdot\|$ as either a particular matrix norm, the choice of which will be clear in Lemma~\ref{lem3}, or a vector norm that is compatible with this particular matrix norm, i.e.,~$\|A\mb{x}\|\leq\|A\|\|\mb{x}\|$ for all matrices,~$A$, and all vectors,~$\mb{x}$. The notation~$\|\cdot\|_2$ denotes the Euclidean norm of vectors and matrices. Since all vector norms on finite-dimensional vector space are equivalent, we have the following:~$c{'}\|\cdot\|\leq\|\cdot\|_2\leq c\|\cdot\|, d{'}\|\cdot\|_2\leq\|\cdot\|\leq d\|\cdot\|_2$, where~$c{'},c,d{'},d$ are some positive constants. \end{color}

\section{ADD-OPT Development}\label{s2}
In this section, we formulate the optimization problem and describe ADD-OPT. We first derive an informal but intuitive proof showing that ADD-OPT enables the agents to achieve consensus and reach the optimal solution of Problem P1, described below. After propose ADD-OPT, we relate it to DEXTRA and discuss the applicable range of step-sizes. Formal convergence results are deferred to Sections~\ref{s3}.

Consider a strongly-connected network of~$n$ agents communicating over a directed graph,~$\mc{G}=(\mc{V},\mc{E})$, where~$\mc{V}$ is the set of agents, and~$\mc{E}$ is the collection of ordered pairs,~$(i,j),i,j\in\mc{V}$, such that agent~$j$ can send information to agent~$i$, $j\rightarrow i$. Define~$\mc{N}_i^{{\scriptsize \mbox{in}}}$ to be the collection of in-neighbors, i.e., the set of agents that can send information to agent~$i$. Similarly,~$\mc{N}_i^{{\scriptsize \mbox{out}}}$ is the set of out-neighbors of agent~$i$. Note that both~$\mc{N}_i^{{\scriptsize \mbox{in}}}$ and~$\mc{N}_i^{{\scriptsize \mbox{out}}}$ include node~$i$. Note that in a directed graph when~$(i,j)\in\mc{E}$, it is not necessary that~$(j,i)\in\mc{E}$. Consequently,~$\mc{N}_i^{{\scriptsize \mbox{in}}}\neq\mc{N}_i^{{\scriptsize \mbox{out}}}$, in general. We assume that each agent~$i$ knows\footnote{Such an assumption is standard in the related literature, see e.g.,~\cite{opdirect_Nedic,opdirect_Tsianous,opdirect_Tsianous2,opdirect_Tsianous3,D-DGD,D-DPS,opdirect_Makhdoumi,DEXTRA}.} its out-degree (the number of out-neighbors), denoted by~$|\mc{N}_i^{{\scriptsize \mbox{out}}}|$; see~\cite{bullo_book} for details. We focus on solving a convex optimization problem that is distributed over the above multi-agent network. In particular, the network of agents cooperatively solves the following optimization problem:
\begin{align}
\mbox{P1}:
\quad\mbox{min  }&f(\mb{z})=\sum_{i=1}^nf_i(\mb{z}),\nonumber
\end{align}
where each local objective function,~$f_i:\mbb{R}^p\rightarrow\mbb{R}$ is known only by agent~$i$. We assume that each local function,~$f_i(z)$, is strongly-convex and differentiable, whereas the optimal solution of Problem P1 exists and is finite. Our goal is to develop a distributed algorithm such that each agent converges to the global solution of Problem P1 via exchanging information with nearby agents over a directed graph. We formalize the set of assumptions as follows. These assumptions are standard in the literature for optimization of smooth convex functions, see e.g.,~\cite{EXTRA,DEXTRA,DGD_Yuan}.
\begin{assump}\label{asp2}
	The communication graph,~$\mc{G}$, is a strongly-connected digraph. Each agent in the network has the knowledge of its out-degree.
\end{assump}

\begin{assump}[Lipschitz-continuous gradients and strong-convexity]\label{asp1}
	Each local function,~$f_i$, is differentiable and strongly-convex, and the gradient is globally Lipschitz-continuous, i.e., for any~$i$ and~$\mb{z}_1, \mb{z}_2\in\mbb{R}^p$,
	{\color{black}
	\begin{enumerate}[label=(\alph*)]
	    \item there exists a positive constant~$l$ such that
	    $$\|\mb{\nabla} f_i(\mb{z}_1)-\mb{\nabla} f_i(\mb{z}_2)\|_2\leq l\|\mb{z}_1-\mb{z}_2\|_2;$$
		\item there exists a positive constant~$s$ such that,
		$$f_i(\mb{z}_1)-f_i(\mb{z}_2)\leq\mb{\nabla} f_i(\mb{z}_1)^\top(\mb{z}_1-\mb{z}_2)-\frac{s}{2}\|\mb{z}_1-\mb{z}_2\|_2^2.$$ 
	\end{enumerate}
    Clearly, the Lipschitz-continuity and strongly-convexity constants for the global objective function~$f(\mb{z})$ are~$nl$ and~$ns$, respectively.
    }
\end{assump}

\begin{assump}\label{asp3}
	The optimal solution exists and is bounded and unique. In particular, we denote~$\ul{\mb{z}}^*\in\mbb{R}^p$ the optimal solution, i.e., 
	$$\ul{\mb{z}}^*=\min_{\mb{z}\in\mbb{R}^p}f(\mb{z}).$$
\end{assump}

\subsection{ADD-OPT Algorithm}
To solve Problem P1, we describe the implementation of ADD-OPT as follows. Each agent,~$j\in\mc{V}$, maintains three vector variables:~$\mb{x}_{k}^j$,~$\mb{z}_{k}^j$,~$\mb{w}_{k}^j,$ all in~$\mbb{R}^p$, as well as a scalar variable,~$\mathsf{y}_{k}^j\in\mbb{R}$, where~$k$ is the discrete-time index. At the~$k$th iteration, agent~$j$ assigns a weight to its states:~$a_{ij}\mb{x}_{k}^j$,~$a_{ij}\mb{w}_{k}^j$, and~$a_{ij}\mathsf{y}_{k}^j$; and sends these to each of its out-neighbors,~$i\in\mc{N}_j^{{\scriptsize \mbox{out}}}$, where the weights,~$a_{ij}$'s are such that:
\begin{align}
a_{ij}&=\left\{
\begin{array}{rl}
>0,&i\in\mc{N}_j^{{\scriptsize \mbox{out}}},\\
0,&\mbox{otherwise},
\end{array}
\right.
\quad
\sum_{i=1}^na_{ij}=1,\forall j.\label{a}
\end{align}
With agent~$i$ receiving the information from its in-neighbors, it updates~$\mb{x}_{k+1}^i$,~$\mathsf{y}_{k+1}^i$,~$\mb{z}_{k+1}^i$ and~$\mb{w}_{k+1}^i$ as follows:
\begin{subequations}\label{alg1}
	\begin{align}
	\mb{x}_{k+1}^i=&\sum_{j\in\mc{N}_i^{{\tiny \mbox{in}}}}a_{ij}\mb{x}_{k}^j-\alpha\mb{w}_{k}^i,\label{alg1a}\\
	\mathsf{y}_{k+1}^i=&\sum_{j\in\mc{N}_i^{{\tiny \mbox{in}}}}a_{ij}\mathsf{y}_{k}^j,\label{alg1b}\\
	\mb{z}_{k+1}^i=&\frac{\mb{x}_{k+1}^i}{\mathsf{y}_{k+1}^i},\label{alg1c}\\
	\mb{w}_{k+1}^i=&\sum_{j\in\mc{N}_i^{{\tiny \mbox{in}}}}a_{ij}\mb{w}_{k}^j+\nabla f_i(\mb{z}_{k+1}^i)-\nabla f_i(\mb{z}_{k}^i).\label{alg1d}
	\end{align}
\end{subequations}
In the above,~$\nabla f_i(\mb{z}_{k}^i)$ is the gradient of~$f_i(\mb{z})$ at~$\mb{z}=\mb{z}_{k}^i$. The step-size,~$\alpha$, is a positive number within a certain interval. We will explicitly show the range of~$\alpha$ in Section~\ref{s3}. For any agent~$i$, it is initialized with arbitrary vectors,~$\mb{x}_{0}^i$ and $\mb{z}_0^i$,~$\mb{w}_{0}^i=\nabla f_i(\mb{z}_{0}^i)$, and~$\mathsf{y}_{0}^i=1$. It is worth noting that~$\mathsf{y}_{k}^i\neq0$,~$\forall k$, given its initial condition and Assumption~\ref{asp2}, \cite{eig2}. We note that Eq.~\eqref{a} leads to a column-stochastic weight matrix,~$\ul{A}=\{a_{ij}\}$, by only requiring each agent to know its out-degree. It is indeed possible to construct such weights, e.g., by choosing
\begin{align}
a_{ij}&=\left\{
\begin{array}{rl}
1/|\mc{N}_j^{{\scriptsize \mbox{out}}}|,&i\in\mc{N}_j^{{\scriptsize \mbox{out}}},\\
0,&\mbox{otherwise},
\end{array}
\right.
\quad
,\forall j.\label{aa}
\end{align}

For analysis purposes, we now write Eq.~\eqref{alg1} in a matrix form. We use the following notation:
\begin{eqnarray}\label{not1}
\mb{x}_k=
\left[
\begin{array}{c}
\mb{x}_{k}^1\\
\vdots\\
\mb{x}_{k}^n
\end{array}
\right],~\mb{w}_k=
\left[
\begin{array}{c}
\mb{w}_{k}^1\\
\vdots\\
\mb{w}_{k}^n
\end{array}
\right],~\mb{z}_k=
\left[
\begin{array}{c}
\mb{z}_{k}^1\\
\vdots\\
\mb{z}_{k}^n
\end{array}
\right],\\\label{not2}
\nabla\mb{f}_k=
\left[
\begin{array}{c}
\nabla \mb{f}_1(\mb{z}_{k}^1)\\
\vdots\\
\nabla \mb{f}_n(\mb{z}_{k}^n)
\end{array}
\right],~\mb{y}_k=
\left[
\begin{array}{c}
\mathsf{y}_{k}^1\\
\vdots\\
\mathsf{y}_{k}^n
\end{array}
\right].
\end{eqnarray}
Let~$\ul{A}\in\mbb{R}^{n\times n}$ be the weighted adjacency matrix, i.e., the collection of weights,~$a_{ij}$; define
\begin{eqnarray}\label{not3}
A&=&\ul{A}\otimes I_p,\label{A}\\\label{Yk}
Y_k&=&\mbox{diag}\left(\mb{y}_k\right)\otimes I_p.
\end{eqnarray}
where `$\otimes$' is the Kronecker product. Clearly, we have $A,Y_k\in\mbb{R}^{np\times np}$, and~$A$ is a column-stochastic matrix. Given that~$\mb{y}_0=\mb{1}_n$, the graph,~$\mc{G}$, is strongly-connected and the corresponding weight matrix,~$\ul{A}$, is non-negative, $Y_k$ is invertible for any~$k$,~\cite{eig2}. Then, we can write Eq.~\eqref{alg1} in the matrix form, equivalently, as follows:
\begin{subequations}\label{alg1_matrix}
	\begin{align}
	\mb{x}_{k+1}=&A\mb{x}_{k}-\alpha\mb{w}_{k},\label{alg1_ma}\\
	\mb{y}_{k+1}=&\ul{A}\mb{y}_{k},\label{alg1_mb}\\
	\mb{z}_{k+1}=&Y_{k+1}^{-1}\mb{x}_{k+1},\label{alg1_mc}\\
	\mb{w}_{k+1}=&A\mb{w}_{k}+\nabla \mb{f}_{k+1}-\nabla \mb{f}_{k},\label{alg1_md}
	\end{align}
\end{subequations}
where we have the initial condition~$\mb{w}_0=\nabla\mb{f}_0$,~$\mb{y}_0=\mb{1}_n$.
\subsection{Interpretation of ADD-OPT}
Based on Eq.~\eqref{alg1_matrix}, we now give an intuitive interpretation on the convergence of ADD-OPT to the optimal solution. By combining Eqs. \eqref{alg1_ma} and \eqref{alg1_md}, we obtain that
 \begin{align}
 \mb{x}_{k+1}&=A\mb{x}_{k}-\alpha\left[A\mb{w}_{k-1}+\nabla \mb{f}_{k}-\nabla \mb{f}_{k-1}\right],\nonumber\\
 &=A\mb{x}_{k}-\alpha A\left[\frac{A\mb{x}_{k-1}-\mb{x}_k}{\alpha}\right]-\alpha\left[\nabla \mb{f}_{k}-\nabla \mb{f}_{k-1}\right],\nonumber\\
 &=2A\mb{x}_{k}-A^2\mb{x}_{k-1}-\alpha\left[\nabla \mb{f}_{k}-\nabla \mb{f}_{k-1}\right].\label{alg2}
 \end{align}
 Assume that the sequences generated by Eq.~\eqref{alg1_matrix} converge to their limits (note that this is not necessarily true), denoted by~$\mb{x}_\infty$,~$\mb{y}_\infty$,~$\mb{w}_\infty$,~$\mb{z}_\infty$,~$\nabla\mb{f}_\infty$, respectively. It follows from Eq.~\eqref{alg2} that
 \begin{align}\label{int_2}
\mb{x}_{\infty}=2A\mb{x}_{\infty}-A^2\mb{x}_{\infty}-\alpha\left[\nabla \mb{f}_{\infty}-\nabla \mb{f}_{\infty}\right],
 \end{align}
 which implies that~$(I_{np}-A)^2\mb{x}_\infty=\mb{0}_{np}$ or~$[(I_n-\ul{A})^2\otimes I_p]\mb{x}_\infty=\mb{0}_{np}$. Considering that~$\mb{y}_\infty=\ul{A}\mb{y}_\infty$, we obtain that~$\mb{x}_\infty\in\mbox{span}\{\mb{y}_\infty\otimes\mb{u}_p\}$ for some arbitrary~$p$-dimensional vector,~$\mb{u}_p$. Therefore, it follows that
 \begin{align}\label{int_3}
 \mb{z}_\infty=Y_{\infty}^{-1}\mb{x}_{\infty}\in\mbox{span}\{\mb{1}_n\otimes\mb{u}_p\},
 \end{align}
 where~$\mb{u}_p$ is some arbitrary~$p$-dimensional vector. The consensus is reached.
 
 By summing up the updates in Eq.~\eqref{alg2} over~$k$ from~$0$ to~$\infty$, we obtain that
 \begin{align}
 \mb{x}_\infty=A\mb{x}_\infty+\sum_{r=1}^\infty(A-I_{np})\mb{x}_r-\sum_{r=0}^\infty(A^2-A)\mb{x}_r-\alpha\nabla\mb{f}_\infty.\nonumber
 \end{align}
 Noting that~$\mb{x}_\infty=A\mb{x}_\infty$, it follows 
 \begin{align}
\alpha\nabla\mb{f}_\infty=\sum_{r=1}^\infty(A-I_{np})\mb{x}_r-\sum_{r=0}^\infty(A^2-A)\mb{x}_r.\nonumber
 \end{align}
 Therefore, we obtain that
\begin{align}
&\alpha(\mb{1}_n\otimes I_p)^\top\nabla\mb{f}_\infty\nonumber\\
&=\left(\mb{1}_n^\top(\ul{A}-I_{n})\otimes I_p\right)\sum_{r=1}^\infty\mb{x}_r-\left(\mb{1}_n^\top(\ul{A}^2-\ul{A})\otimes I_p\right)\sum_{r=0}^\infty\mb{x}_r,\nonumber\\
&=\mb{0}_p,\nonumber
\end{align}
which is the optimality condition of Problem P1 considering that~$\mb{z}_\infty\in\mbox{span}\{\mb{1}_n\otimes\mb{u}_p\}$. To summarize, if we assume that the sequences updated in Eq.~\eqref{alg1_matrix} have limits,~$\mb{x}_\infty$,~$\mb{y}_\infty$,~$\mb{w}_\infty$,~$\mb{z}_\infty$,~$\nabla\mb{f}_\infty$, we arrive at a conclusion that~$\mb{z}_\infty$ achieves consensus and reaches the optimal solution of Problem P1. We next discuss the relations between ADD-OPT and DEXTRA.

\subsection{ADD-OPT and DEXTRA}
Recent papers provide a fast distributed algorithm, termed DEXTRA \cite{DEXTRA,the_copy_work}, to solve Problem P1 over directed graphs. It achieves a linear convergence rate given that the objective functions are strongly-convex. At the~$k$th iteration of DEXTRA, each agent~$i$ keeps and updates three states,~$x_{k,i}$,~$\mathsf{y}_{k,i}$, and~$z_{k,i}$. The iteration, in matrix form, is shown as follows.
\begin{subequations}\label{dextra}
	\begin{align}
	\mb{x}_{k+1}=&\left(I_{np}+A\right)\mb{x}_k-\widetilde{A}\mb{x}_{k-1}-\alpha\left[\nabla\mb{f}_k-\nabla\mb{f}_{k-1}\right],\label{dextra1a}\\
	\mb{y}_{k+1}=&\underline{A}\mb{y}_k,\label{dextra1b}\\
	\mb{z}_{k+1}=&Y_{k+1}^{-1}\mb{x}_{k+1},\label{dextra1c}
	\end{align}
\end{subequations}
where~$\widetilde{A}$ is a column-stochastic matrix satisfying that~$\widetilde{A}=\theta I_{np}+(1-\theta)A$ with any~$\theta\in(0,\frac{1}{2}]$, and all other notation is the same as from earlier in this paper. 

By comparing Eqs.~\eqref{alg2} and \eqref{dextra1a},~\eqref{alg1_mb} and \eqref{dextra1b}, and~\eqref{alg1_mc} and \eqref{dextra1c}, it follows that the only difference between ADD-OPT and DEXTRA lies in the weighting matrices used when updating~$\mb{x}_k$. From DEXTRA to ADD-OPT, we change~$(I_{np}+A)$ in \eqref{dextra1a} to~$2A$ in \eqref{alg2}, and~$\widetilde{A}$ to~$A^2$, respectively. Mathematically, if~$A=I_{np}$, (equivalently~$\ul{A}=I_n$), the two algorithms are the same. With this modification, we will show in Section \ref{s3} that ADD-OPT supports a wider range of step-sizes as compared to DEXTRA, i.e., the greatest lower bound,~$\underline{\alpha}$, of ADD-OPT's step-size is zero while that of DEXTRA's is strictly positive. This also reveals the reason why in DEXTRA constructing~$\ul{A}$ to be an extremely diagonally-dominant matrix is preferred, see Assumption A2(c) in \cite{DEXTRA}. The more similar~$\ul{A}$ is to~$I_n$, the closer~$\underline{\alpha}$ approaches zero. However, in DEXTRA,~$\underline{\alpha}$ can never reach zero since~$\ul{A}$ cannot be the identity,~$I_n$, which otherwise means there is no communication between agents. In Section \ref{s5}, we provide a totally different proof, that is further more compact and elegant when compared to DEXTRA's analysis, to show the linear convergence rate of ADD-OPT.


\section{Main Result}\label{s3}
In this section, we analyze ADD-OPT with the help of the following notation. From Eqs.~\eqref{not1}-\eqref{Yk}, we further define~$\overline{\mb{x}}_k$,~$\overline{\mb{w}}_k$,~$\mb{z}^*$,~$\mb{g}_k$,~$\mb{h}_k\in\mbb{R}^{np}$ as 
\begin{align}\label{eqxb}
\overline{\mb{x}}_k&=\frac{1}{n}(\mb{1}_n\otimes I_p)(\mb{1}_n^\top\otimes I_p)\mb{x}_k,\\\label{eqwb}
\overline{\mb{w}}_k&=\frac{1}{n}(\mb{1}_n\otimes I_p)(\mb{1}_n^\top\otimes I_p)\mb{w}_k,\\
\mb{z}^*&=\mb{1}_n\otimes\ul{\mb{z}}^*,\\
\mb{g}_k&=\frac{1}{n}(\mb{1}_n\otimes I_p)(\mb{1}_n^\top\otimes I_p)\nabla\mb{f}_k,\\
\mb{h}_k&=\frac{1}{n}(\mb{1}_n\otimes I_p)(\mb{1}_n^\top\otimes I_p)\nabla\mb{f}(\overline{\mb{x}}_k),
\end{align}
where
\begin{eqnarray*}
\nabla\mb{f}(\overline{\mb{x}}_k)=
\left[
\begin{array}{c}
\nabla f_1(\frac{1}{n}(\mb{1}_n^\top\otimes I_p)\mb{x}_k)\\
\vdots\\
\nabla f_n(\frac{1}{n}(\mb{1}_n^\top\otimes I_p)\mb{x}_k)
\end{array}
\right],
\end{eqnarray*}
stacks its components in a column. We denote constants,~$\tau$,~$\epsilon$, and~$\eta$ as
{\color{black}
\begin{align}
\tau&=\left\|A-I_{np}\right\|_2,\label{parameter1}\\
\epsilon&=\left\|I_{np}-A_\infty\right\|_2,\label{parameter2}\\
\eta&=\max\left(\left|1-n\alpha l\right|,\left|1-n\alpha s\right|\right)\label{parameter3},
\end{align}
}
where~$A$ is the column-stochastic weight matrix used in Eq. \eqref{alg1_matrix},~$A_\infty=\lim_{k\rightarrow\infty}A^k$ represents~$A$'s limit,~$\alpha$ is the step-size, and~$l$ and~$s$ are respectively Lipschitz and strong-convexity constants from Assumption~\ref{asp1}. Let~$Y_\infty$ be the limit of~$Y_k$ in Eq. \eqref{Yk},
\begin{align}\label{yinf_eq}
Y_\infty=\lim_{k\rightarrow\infty}Y_k,
\end{align}
and~$y$ and~$y_-$ be the {\color{black}supremum of~$\|Y_k\|_2$ and~$\|Y_k^{-1}\|_2$} over~$k$, respectively, i.e.,
{\color{black}
\begin{align}
y&=\sup_{k}\left\|Y_k\right\|_2,\label{parameter5}\\
y_-&=\sup_{k}\left\|Y_k^{-1}\right\|_2.\label{parameter6}
\end{align}
}Note that the existence of the limits, $A_{\infty}$ and $Y_{\infty}$, will be clear in the following lemmas. Moreover, we define two constants,~$\sigma$, and,~$\gamma_1$, through the following two lemmas, which are related to the convergence of~$A$ and~$Y_\infty$. 
\begin{lem}\label{lem2}
	(Nedic \textit{et al}.~\cite{opdirect_Nedic}) Let Assumption \ref{asp2} hold. Consider~$Y_k$ and its limit~$Y_\infty$ as defined before. There exist~$0<\gamma_1<1$ and~$0<T<\infty$ such that for all~$k$	
	\begin{align}\label{DkDinfty1}
	\left\|Y_k-Y_\infty\right\|_2\leq T\gamma_1^{k}.
	\end{align}
\end{lem}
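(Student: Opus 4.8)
The plan is to reduce the claim to the classical convergence theory for powers of a primitive column-stochastic matrix, applied to~$\ul{A}$. The starting point is that, iterating Eq.~\eqref{alg1_mb} with~$\mb{y}_0=\mb{1}_n$, one has~$\mb{y}_k=\ul{A}^k\mb{1}_n$, so the whole question is governed by the behaviour of~$\ul{A}^k$. Under Assumption~\ref{asp2} the digraph~$\mc{G}$ is strongly connected, and since each node's in-neighbour set contains the node itself the diagonal of~$\ul{A}$ is strictly positive; hence~$\ul{A}$ is irreducible and aperiodic, i.e.\ primitive. By the Perron--Frobenius theorem for primitive matrices,~$\rho(\ul{A})=1$ is a simple eigenvalue, every other eigenvalue lies strictly inside the unit disk, and the powers converge to the Perron projection. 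Because~$\ul{A}$ is column-stochastic its left Perron eigenvector is~$\mb{1}_n^\top$ and its right Perron eigenvector is~$\bs{\pi}$ (normalized so~$\mb{1}_n^\top\bs{\pi}=1$), so~$\ul{A}^k\to B:=\bs{\pi}\mb{1}_n^\top$. In particular~$\mb{y}_\infty=B\mb{1}_n=n\bs{\pi}$ and~$Y_\infty=\mbox{diag}(\mb{y}_\infty)\otimes I_p$ exist, which also justifies the earlier references to the limits~$Y_\infty$ and~$A_\infty$.

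Next I would make this convergence \emph{geometric} rather than merely asymptotic. Using~$\ul{A}\bs{\pi}=\bs{\pi}$,~$\mb{1}_n^\top\ul{A}=\mb{1}_n^\top$, and~$\mb{1}_n^\top\bs{\pi}=1$, one checks the identities~$\ul{A}B=B\ul{A}=B=B^2$, from which a one-line induction gives the clean relation~$(\ul{A}-B)^k=\ul{A}^k-B$ for all~$k\geq 1$. The eigenvalues of~$\ul{A}-B$ are exactly those of~$\ul{A}$ with the Perron eigenvalue~$1$ replaced by~$0$; hence its spectral radius equals the subdominant eigenvalue modulus~$\sigma_2<1$. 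By Gelfand's formula~$\|(\ul{A}-B)^k\|_2^{1/k}\to\sigma_2$, so for any fixed~$\gamma_1\in(\sigma_2,1)$ there is a finite constant~$C$ with~$\|(\ul{A}-B)^k\|_2\leq C\gamma_1^k$ for all~$k$ (the finitely many early terms are absorbed into~$C$, and the strict inequality~$\gamma_1>\sigma_2$ leaves room to dominate any polynomial factors arising from Jordan blocks attached to the subdominant eigenvalues). Combining, $\|\ul{A}^k-B\|_2\leq C\gamma_1^k$.

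Finally I would transfer this bound to~$Y_k$. Since~$\mb{y}_k-\mb{y}_\infty=(\ul{A}^k-B)\mb{1}_n$, we obtain~$\|\mb{y}_k-\mb{y}_\infty\|_2\leq C\sqrt{n}\,\gamma_1^k$. The difference~$Y_k-Y_\infty=\mbox{diag}(\mb{y}_k-\mb{y}_\infty)\otimes I_p$ is diagonal, so its spectral norm equals~$\|\mb{y}_k-\mb{y}_\infty\|_\infty\leq\|\mb{y}_k-\mb{y}_\infty\|_2$, and tensoring with~$I_p$ does not change the norm. Setting~$T:=C\sqrt{n}$ then yields~$\|Y_k-Y_\infty\|_2\leq T\gamma_1^k$, as claimed.

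The only genuinely delicate point is upgrading plain convergence of~$\ul{A}^k$ to a geometric rate when~$\ul{A}$ fails to be diagonalizable: subdominant Jordan blocks produce bounds of the form~$k^{m}\sigma_2^k$, and it is precisely the freedom to choose~$\gamma_1$ strictly between~$\sigma_2$ and~$1$ that absorbs these polynomial prefactors into a single exponential. Everything else — primitivity from strong connectivity together with self-loops, identification of the Perron projection via column-stochasticity, and the passage from~$\ul{A}^k$ to~$Y_k$ — is routine. Since this statement is quoted from Nedic \textit{et al.}~\cite{opdirect_Nedic}, one may alternatively cite it directly; the sketch above is how I would reconstruct it.
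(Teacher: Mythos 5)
Your proof is correct, but there is nothing in the paper to compare it against line by line: the paper offers no proof of this lemma, which is quoted directly from Nedic \emph{et al}.~\cite{opdirect_Nedic}. What your reconstruction actually parallels is the paper's own proof of Lemma~\ref{lem3}, which invokes the same ingredients --- $\ul{A}$ irreducible, column-stochastic, with positive diagonal (each node is its own in/out-neighbor), hence primitive, so that $\lim_{k\rightarrow\infty}\ul{A}^k=\bs{\pi}\mb{1}_n^\top$ by Perron--Frobenius --- and then handles the subdominant spectrum by picking a matrix norm $\|\cdot\|$ in which $\|\ul{A}-\bs{\pi}\mb{1}_n^\top\|<1$ (citing Horn--Johnson, Chapter~5). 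Your route differs only in that last device: in place of the weighted-norm trick you establish the projection identities $\ul{A}B=B\ul{A}=B=B^2$ for $B=\bs{\pi}\mb{1}_n^\top$, deduce the exact power formula $(\ul{A}-B)^k=\ul{A}^k-B$ for $k\geq 1$, and then invoke Gelfand's formula to convert $\rho(\ul{A}-B)=\sigma_2<1$ into a bound $\|\ul{A}^k-B\|_2\leq C\gamma_1^k$ for any $\gamma_1\in(\sigma_2,1)$, correctly noting that the slack $\gamma_1>\sigma_2$ absorbs the polynomial factors from possible Jordan blocks (this is the one genuinely delicate point, and you handle it). The two devices are standard and equivalent in strength; yours stays entirely in the Euclidean norm, which is what the lemma's statement uses. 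The final transfer --- $\mb{y}_k-\mb{y}_\infty=(\ul{A}^k-B)\mb{1}_n$, the spectral norm of a diagonal matrix being the max-modulus of its entries, and the Kronecker product with $I_p$ preserving the $2$-norm --- is also sound, so your argument would serve as a complete, self-contained replacement for the citation (with the trivial caveat that $(\ul{A}-B)^k=\ul{A}^k-B$ starts at $k=1$, so the $k=0$ case is covered by enlarging $T$, as your absorption of finitely many terms already does).
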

{\color{black}
\begin{lem}\label{lem3}
	Let Assumption \ref{asp2} hold. Consider~$Y_\infty$ in Eq.~\eqref{yinf_eq} with~$A$ being the column-stochastic matrix used in Eq. \eqref{alg1_matrix}. For any~$\mb{a}\in\mbb{R}^{np}$, define~$\overline{\mb{a}}=\frac{1}{n}(\mb{1}_n\otimes I_p)(\mb{1}_n^\top\otimes I_p)\mb{a}$. Then, there exists~$0<\sigma<1$ such that for all~$k$
	\begin{align}\label{sigma_eq}
	\left\|A\mb{a}-Y_\infty\overline{\mb{a}}\right\|\leq\sigma\left\|\mb{a}-Y_\infty\overline{\mb{a}}\right\|.
	\end{align}
\end{lem}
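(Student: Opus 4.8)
The plan is to rewrite both sides in terms of the limit matrix $A_\infty=\lim_{k\to\infty}A^k$, which turns the claim into a standard contraction-toward-consensus estimate. Since $A=\ul{A}\otimes I_p$ with $\ul{A}$ column-stochastic and irreducible (Assumption~\ref{asp2}), and since the self-loop weights in Eq.~\eqref{aa} force a positive diagonal, $\ul{A}$ is primitive; hence $\ul{A}^k\to\bs{\pi}\mb{1}_n^\top$ and $A_\infty=(\bs{\pi}\mb{1}_n^\top)\otimes I_p$ exists. Because $\mb{y}_k=\ul{A}^k\mb{1}_n$ by Eq.~\eqref{alg1_mb}, its limit is $\mb{y}_\infty=n\bs{\pi}$, so $Y_\infty=n(\mathrm{diag}(\bs{\pi})\otimes I_p)$. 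The first step I would carry out is the algebraic identity $Y_\infty\overline{\mb{a}}=A_\infty\mb{a}$: writing $\overline{\mb{a}}=\tfrac1n(\mb{1}_n\mb{1}_n^\top\otimes I_p)\mb{a}$ and using $\mathrm{diag}(\bs{\pi})\mb{1}_n=\bs{\pi}$ collapses $Y_\infty\overline{\mb{a}}$ to $(\bs{\pi}\mb{1}_n^\top\otimes I_p)\mb{a}=A_\infty\mb{a}$. With this substitution the desired inequality becomes $\|(A-A_\infty)\mb{a}\|\le\sigma\|(I_{np}-A_\infty)\mb{a}\|$.

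Next I would exploit that $A_\infty$ is a projection onto the unit-eigenvalue eigenspace of $A$. From $\ul{A}\bs{\pi}=\bs{\pi}$, $\mb{1}_n^\top\ul{A}=\mb{1}_n^\top$, and $\mb{1}_n^\top\bs{\pi}=1$ I obtain $AA_\infty=A_\infty$ and $A_\infty^2=A_\infty$, which give the crucial factorization $(A-A_\infty)(I_{np}-A_\infty)=A-A_\infty$. Consequently $\|(A-A_\infty)\mb{a}\|=\|(A-A_\infty)(I_{np}-A_\infty)\mb{a}\|\le\|A-A_\infty\|\,\|(I_{np}-A_\infty)\mb{a}\|$, so it suffices to exhibit a norm in which $\|A-A_\infty\|<1$.

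The third step establishes $\rho(A-A_\infty)<1$ and fixes the norm. Since $A_\infty$ strips off exactly the simple unit eigenvalue, the spectrum of $A-A_\infty$ is $\{0\}\cup\{\lambda_i(\ul{A}):i\ge2\}$, and primitivity of $\ul{A}$ forces $|\lambda_i|<1$ for $i\ge2$, whence $\rho(A-A_\infty)<1$. Invoking the standard fact that every matrix admits a submultiplicative norm within an arbitrarily small $\varepsilon$ of its spectral radius, I would pick $\varepsilon$ small enough that this norm $\|\cdot\|$ (the very matrix norm promised in the Basic Notation) satisfies $\|A-A_\infty\|=:\sigma<1$. Chaining the inequalities of the previous paragraph and substituting $A_\infty\mb{a}=Y_\infty\overline{\mb{a}}$ back in yields $\|A\mb{a}-Y_\infty\overline{\mb{a}}\|\le\sigma\|\mb{a}-Y_\infty\overline{\mb{a}}\|$, which is the claim.

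The main obstacle is conceptual rather than computational: recognizing that the diagonal weight $Y_\infty$ composed with the averaging operator reproduces the projection $A_\infty$, which is precisely what linearizes the whole estimate. After that identification the remaining care is purely in the final step, namely justifying $\rho(A-A_\infty)<1$ through primitivity and then legitimately selecting the associated norm so that $\sigma<1$ while keeping it compatible with the vector norm used throughout the rest of the paper.
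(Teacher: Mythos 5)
Your proposal is correct and follows essentially the same route as the paper's own proof: identify $A_\infty=(\bs{\pi}\mb{1}_n^\top)\otimes I_p$ via Perron--Frobenius, verify $Y_\infty\overline{\mb{a}}=A_\infty\mb{a}$, use the projection identities $AA_\infty=A_\infty=A_\infty^2$ to factor $A\mb{a}-Y_\infty\overline{\mb{a}}=(A-A_\infty)(\mb{a}-Y_\infty\overline{\mb{a}})$, and conclude with $\rho(A-A_\infty)<1$ together with a matrix norm (and compatible vector norm) satisfying $\sigma=\|A-A_\infty\|<1$. The only cosmetic difference is that you derive $Y_\infty=n\,(\mathrm{diag}(\bs{\pi})\otimes I_p)$ and the spectrum of $A-A_\infty$ explicitly, details the paper leaves as remarks; also note the positive diagonal already follows from Eq.~\eqref{a} (since $i\in\mc{N}_i^{\scriptsize\mbox{out}}$), not only from the particular choice in Eq.~\eqref{aa}.
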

\begin{proof}
	First note that~$A=\ul{A}\otimes I_p$. Since~$\ul{A}$ is irreducible, column-stochastic with positive diagonals, from Perron-Frobenius theorem we note that~$\rho(\ul{A})=1$, every eigenvalue of~$\ul{A}$ other than~$1$ is strictly less than~$\rho(\ul{A})$, and~$\bs{\pi}$ is a strictly positive (right) eigenvector corresponding to the eigenvalue of~$1$ such that~$\mb{1}_n^\top\bs{\pi} = 1$; thus~$\lim_{k\rightarrow\infty} \ul{A}^k = \bs{\pi}\mb{1}_n^\top$. Recalling Eq.~$\eqref{A}$, we have $A_{\infty}=\lim_{k\rightarrow\infty}{A^k}=\lim_{k\rightarrow\infty}{(\ul{A}\otimes I_p)^k}=(\lim_{k\rightarrow\infty}{\ul{A}^k})\otimes I_p=(\bs{\pi}\mb{1}_n^\top)\otimes I_p.$ It follows that:
	\begin{align}
	AA_{\infty} &= (\ul{A}\otimes I_p)\Big((\bs{\pi}\mb{1}_n^\top)\otimes I_p\Big) = (\ul{A}\bs{\pi}\mb{1}_n^\top)\otimes I_p = A_{\infty}; \nonumber \\
    A_{\infty}A_{\infty} &= \Big((\bs{\pi}\mb{1}_n^\top)\otimes I_p\Big)\Big((\bs{\pi}\mb{1}_n^\top)\otimes I_p\Big), \nonumber\\
    &=(\bs{\pi}\mb{1}_n^\top\bs{\pi}\mb{1}_n^\top)\otimes I_p = A_{\infty}. \nonumber
	\end{align}
	Thus~$AA_{\infty}-A_{\infty}A_{\infty}$ is a zero matrix. It can also be verified that~$\frac{1}{n}Y_\infty(\mb{1}_n\otimes I_p)(\mb{1}_n^\top\otimes I_p)=A_\infty$. Based on the discussion above, we have
	\begin{eqnarray}
	A\mb{a}-Y_\infty\overline{\mb{a}}=(A-A_{\infty})(\mb{a}-A_{\infty}\mb{a})=(A-A_{\infty})
	(\mb{a}-Y_\infty\overline{\mb{a}}) \nonumber.
	\end{eqnarray}
	Next we note that~$$\rho(A-A_{\infty})=\rho\Big((\ul{A}-\bs{\pi}\mb{1}_n^\top)\otimes I_p\Big)=\rho(\ul{A}-\bs{\pi}\mb{1}_n^\top)<1,$$ and there exists a matrix norm such that~$\| A-A_{\infty}\| <1$ with a compatible vector norm,~$\|\cdot\|$, see~\cite{hornjohnson:13}: Chapter~5 for details, i.e.,
	\begin{eqnarray}
	\left\|A\mb{a}-Y_{\infty}\overline{\mb{a}}\right\| 
	\leq \|A-A_{\infty}\| \left\|\mb{a}-Y_{\infty}\overline{\mb{a}}\right\|,
	\end{eqnarray}
	and the lemma follows with~$\sigma=\|A-A_{\infty}\|$.
\end{proof}
}
Based on the above notation, we finally denote~$\mb{t}_k$,~$\mb{s}_k\in\mbb{R}^3$, and~$G$,~$H_k\in\mbb{R}^{3\times3}$, for all~$k$ as
{\color{black}
\begin{align}\label{t}
\mb{t}_k&=\left[
\begin{array}{c}
\left\|\mb{x}_k-Y_\infty\overline{\mb{x}}_k\right\| \\
\left\|\overline{\mb{x}}_k-\mb{z}^*\right\|_2 \\
\left\|\mb{w}_k-Y_\infty\mb{g}_k\right\|
\end{array}
\right],
\qquad\mb{s}_k=\left[
\begin{array}{cc}
\left\|\mb{x}_k\right\|_2 \\
0\\
0
\end{array}
\right],
\notag\\
G&=\left[
\begin{array}{ccc}
\sigma & 0 &\alpha \\
\alpha cly_- & \eta & 0\\
cd\epsilon ly_-(\tau+\alpha lyy_-) & \alpha d\epsilon l^2yy_- & \sigma+\alpha cd\epsilon ly_-
\end{array}
\right],
\notag\\
H_k&=\left[
\begin{array}{ccc}
0 & 0 & 0\\
\alpha ly_-T\gamma_1^{k-1} & 0 & 0\\
(\alpha ly+2)d\epsilon ly_-^2T\gamma_1^{k-1} & 0 & 0
\end{array}
\right].
\end{align}
}We now state a key relation of this paper.
\begin{lem}\label{thm1}
	Let the directed graph be strongly-connected and the optimal solution of Problem P1 exist (Assumption \ref{asp2} and \ref{asp3}). Let~$\mb{t}_k$,~$\mb{s}_k$,~$G$, and~$H_k$ be defined in Eq.~\eqref{t}, in which~$\mb{x}_k$ is the sequence generated by ADD-OPT, Eq.~\eqref{alg1_matrix}, over~$k$. Under the smooth and strong-convexity assumptions (Assumption \ref{asp1}), we have~$\mb{t}_k$,~$\mb{s}_k$,~$G$, and~$H_k$ satisfy the following linear relation,
	\begin{align}\label{thm1_eq}
	\mb{t}_k\leq G\mb{t}_{k-1}+H_{k-1}\mb{s}_{k-1}.
	\end{align}
\end{lem}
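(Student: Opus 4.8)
The plan is to prove the three scalar inequalities encoded row-by-row in Eq.~\eqref{thm1_eq}: one bound for each entry of $\mb{t}_k$, with the $i$th bound reproducing exactly the $i$th row of $G\mb{t}_{k-1}+H_{k-1}\mb{s}_{k-1}$. Before treating the rows, I would record two consequences of the column-stochasticity of $A$. First, left-multiplying Eq.~\eqref{alg1_md} by $(\mb{1}_n^\top\otimes I_p)$ and inducting from $\mb{w}_0=\nabla\mb{f}_0$ gives the gradient-tracking invariant $(\mb{1}_n^\top\otimes I_p)\mb{w}_k=(\mb{1}_n^\top\otimes I_p)\nabla\mb{f}_k$, hence $\overline{\mb{w}}_k=\mb{g}_k$; this is what lets the third component be written against $Y_\infty\mb{g}_k$. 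Second, the same left-multiplication of Eq.~\eqref{alg1_ma} yields the averaged dynamics $\overline{\mb{x}}_k=\overline{\mb{x}}_{k-1}-\alpha\mb{g}_{k-1}$. I would also reuse two facts from the proof of Lemma~\ref{lem3}: $A_\infty=\tfrac{1}{n}Y_\infty(\mb{1}_n\otimes I_p)(\mb{1}_n^\top\otimes I_p)$, so that $Y_\infty\overline{\mb{a}}=A_\infty\mb{a}$, and $(A-I_{np})A_\infty=AA_\infty-A_\infty=\mb{0}$, so that $(A-I_{np})Y_\infty\overline{\mb{a}}=\mb{0}$ for every $\mb{a}$.

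For the first row, I would substitute Eq.~\eqref{alg1_ma} and the averaged dynamics into $\mb{x}_k-Y_\infty\overline{\mb{x}}_k$ to obtain $(A\mb{x}_{k-1}-Y_\infty\overline{\mb{x}}_{k-1})-\alpha(\mb{w}_{k-1}-Y_\infty\overline{\mb{w}}_{k-1})$. Applying the contraction of Lemma~\ref{lem3} to the first group and $\overline{\mb{w}}_{k-1}=\mb{g}_{k-1}$ to the second immediately gives $\|\mb{x}_k-Y_\infty\overline{\mb{x}}_k\|\leq\sigma\|\mb{x}_{k-1}-Y_\infty\overline{\mb{x}}_{k-1}\|+\alpha\|\mb{w}_{k-1}-Y_\infty\mb{g}_{k-1}\|$, i.e.\ the first row of $G$. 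For the second row I would split $\overline{\mb{x}}_k-\mb{z}^*=(\overline{\mb{x}}_{k-1}-\mb{z}^*-\alpha\mb{h}_{k-1})-\alpha(\mb{g}_{k-1}-\mb{h}_{k-1})$. The first bracket is an exact gradient step of the global objective at the network average, so strong convexity and Lipschitz-continuity (Assumption~\ref{asp1}) contract it by the factor $\eta$ of Eq.~\eqref{parameter3}; the second bracket is controlled by $\|\nabla\mb{f}_{k-1}-\nabla\mb{f}(\overline{\mb{x}}_{k-1})\|_2\leq l\|\mb{z}_{k-1}-\overline{\mb{x}}_{k-1}\|_2$ through the norm-$1$ averaging projection. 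The remaining step is to convert $\|\mb{z}_{k-1}-\overline{\mb{x}}_{k-1}\|_2$ into state quantities by writing $\mb{z}_{k-1}-\overline{\mb{x}}_{k-1}=Y_{k-1}^{-1}(\mb{x}_{k-1}-Y_\infty\overline{\mb{x}}_{k-1})+Y_{k-1}^{-1}(Y_\infty-Y_{k-1})\overline{\mb{x}}_{k-1}$ and bounding the two pieces with $y_-$, Lemma~\ref{lem2}, the norm-equivalence constant $c$, and $\|\overline{\mb{x}}_{k-1}\|_2\leq\|\mb{x}_{k-1}\|_2$; this yields the coefficients $\alpha cly_-$ and $\alpha ly_-T\gamma_1^{k-1}$ of the second rows of $G$ and $H$.

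The third row is the crux and the step I expect to be the main obstacle. Starting from $\mb{w}_k-Y_\infty\mb{g}_k=(A\mb{w}_{k-1}-Y_\infty\overline{\mb{w}}_{k-1})+(I_{np}-A_\infty)(\nabla\mb{f}_k-\nabla\mb{f}_{k-1})$, Lemma~\ref{lem3} contracts the first term by $\sigma$, while Lipschitz-continuity together with $\epsilon=\|I_{np}-A_\infty\|_2$ and the norm-equivalence constant $d$ bounds the second by $d\epsilon l\|\mb{z}_k-\mb{z}_{k-1}\|_2$. Everything then reduces to expanding $\|\mb{z}_k-\mb{z}_{k-1}\|_2$, which is where all the constants in the third rows of $G$ and $H$ are generated. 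I would write
\begin{align*}
\mb{z}_k-\mb{z}_{k-1}=Y_k^{-1}(A-I_{np})\mb{x}_{k-1}+(Y_k^{-1}-Y_{k-1}^{-1})\mb{x}_{k-1}-\alpha Y_k^{-1}\mb{w}_{k-1},
\end{align*}
and treat the three terms in turn. The first uses the annihilation identity $(A-I_{np})Y_\infty\overline{\mb{x}}_{k-1}=\mb{0}$ to replace $\mb{x}_{k-1}$ by $\mb{x}_{k-1}-Y_\infty\overline{\mb{x}}_{k-1}$ and then $\tau=\|A-I_{np}\|_2$, giving the $\tau$-part of the first-component coefficient. The second uses $Y_k^{-1}-Y_{k-1}^{-1}=Y_k^{-1}(Y_{k-1}-Y_k)Y_{k-1}^{-1}$ with $\|Y_{k-1}-Y_k\|_2\leq2T\gamma_1^{k-1}$ (Lemma~\ref{lem2}), producing the constant ``$2$'' in $(\alpha ly+2)$. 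The third needs the auxiliary bounds $\|\mb{w}_{k-1}\|_2\leq c\|\mb{w}_{k-1}-Y_\infty\mb{g}_{k-1}\|+y\|\mb{g}_{k-1}\|_2$ and $\|\mb{g}_{k-1}\|_2\leq l\|\overline{\mb{x}}_{k-1}-\mb{z}^*\|_2+l\|\mb{z}_{k-1}-\overline{\mb{x}}_{k-1}\|_2$ (using $\nabla f(\ul{\mb{z}}^*)=\mb{0}$ and Lipschitz-continuity), after which the second-row conversion of $\|\mb{z}_{k-1}-\overline{\mb{x}}_{k-1}\|_2$ is reused.

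Collecting the three contributions against $\|\mb{x}_{k-1}-Y_\infty\overline{\mb{x}}_{k-1}\|$, $\|\overline{\mb{x}}_{k-1}-\mb{z}^*\|_2$, $\|\mb{w}_{k-1}-Y_\infty\mb{g}_{k-1}\|$, and $\|\mb{x}_{k-1}\|_2$ reproduces the third rows of $G$ and $H$ exactly. The difficulty is entirely in the bookkeeping: every constant must be routed to the correct matrix entry, and the interleaving of the compatible norm $\|\cdot\|$ with the Euclidean norm $\|\cdot\|_2$ must be handled consistently through $c$ and $d$, which is the easiest place to make an error.
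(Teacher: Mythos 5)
Your proposal is correct and follows essentially the same route as the paper's own proof in Section~\ref{s5}: the same row-by-row bounding of the three components of $\mb{t}_k$, the same use of Lemmas~\ref{lem2}, \ref{lem3}, \ref{w-x-}, \ref{yy-}, and~\ref{gd_approach}, and the same key decompositions of $\mb{x}_k-Y_\infty\overline{\mb{x}}_k$, of $\overline{\mb{x}}_k-\mb{z}^*$ into a centralized gradient step plus a perturbation, and of $\mb{z}_k-\mb{z}_{k-1}$ via the annihilation identity $(A-I_{np})Y_\infty\overline{\mb{x}}_{k-1}=\mb{0}$. The only cosmetic difference is that you bound $\left\|\mb{w}_{k-1}\right\|_2$ before applying $Y_k^{-1}$, whereas the paper splits $Y_k^{-1}\mb{w}_{k-1}$ into three terms directly; the resulting constants are identical.
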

\begin{proof}
	See Section~\ref{s5}.
\end{proof}
We leave the complete proof to Section \ref{s5}, with the help of several auxiliary relations in Section \ref{s4}. Note that Eq.~\eqref{thm1_eq} provides a linear iterative relation between~$\mb{t}_k$ and~$\mb{t}_{k-1}$ with matrices,~$G$ and~$H_k$. Thus, the convergence of~$\mb{t}_k$ is fully determined by~$G$ and~$H_k$. More specifically, if we want to prove linear convergence of~$\|\mb{t}_k\|_2$ to zero, it is sufficient to show that~$\rho(G)<1$, where~$\rho(\cdot)$ denotes the spectral radius, as well as the linear decaying of~$H_k$, which is straightforward since~$0<\gamma_1<1$. In Lemma \ref{lem_G}, we first show that with appropriate step-size, the spectral radius of~$G$ is less than~$1$. Afterwards, in Lemma~\ref{lem_GH}, we study the convergence properties of the matrices involving~$G$ and~$H_k$.
\begin{lem}\label{lem_G}
Consider the matrix~$G$ defined in Eq.~\eqref{t} as a function of the step-size,~$\alpha$, denoted in this lemma as~$G_\alpha$ to motivate this dependence. It follows that~$\rho(G_\alpha)<1$ if the step-size,~$\alpha\in(0,\overline{\alpha})$, where
	\begin{color}{black}
	\begin{align}\label{alpha_ub}
	\overline{\alpha}=\min\left\{\frac{\sqrt{\Delta^2+4ns(1-\sigma)^2cd\epsilon l^2yy_-^2(l+ns)}-\Delta}{2cd\epsilon l^2yy_-^2(l+ns)},\frac{1}{nl}\right\},
	\end{align}
	and~$\Delta=nscd\epsilon ly_-(1-\sigma+\tau)$,
	where~$c$ and~$d$ are the constants from the equivalence of~$\|\cdot\|$ defined in Lemma~\ref{lem3} and~$\|\cdot\|_2$. 
    \end{color}
\end{lem}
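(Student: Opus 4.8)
The plan is to exploit the fact that $G_\alpha$ is entrywise nonnegative: for $\alpha>0$ every off-diagonal entry is a product of the positive constants $\sigma,\tau,\epsilon,l,s,c,d,y,y_-,n$ with $\alpha$, and the diagonal entries $\sigma$, $\eta$, $\sigma+\alpha cd\epsilon ly_-$ are nonnegative as well. Hence $I_3-G_\alpha$ is a $Z$-matrix, and by the standard characterization of nonsingular $M$-matrices (see e.g.~\cite{hornjohnson:13}), $\rho(G_\alpha)<1$ is \emph{equivalent} to all three leading principal minors of $I_3-G_\alpha$ being strictly positive. I would therefore reduce the spectral-radius bound to three scalar inequalities in $\alpha$ and show they hold precisely on $(0,\overline{\alpha})$.

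First I would restrict attention to $\alpha<\tfrac{1}{nl}$, which is one of the two terms inside the $\min$ defining $\overline{\alpha}$ in~\eqref{alpha_ub}. Since $s\le l$, this guarantees $0<1-n\alpha l\le 1-n\alpha s<1$, so the maximum in~\eqref{parameter3} is resolved as $\eta=1-n\alpha s$, giving in particular $1-\eta=n\alpha s>0$. With this simplification the first two leading principal minors are immediate: $M_1=1-\sigma>0$ by Lemma~\ref{lem3}, and $M_2=(1-\sigma)(1-\eta)=(1-\sigma)\,n\alpha s>0$. Consequently, on $(0,\tfrac1{nl})$ the only minor that can fail to be positive is the full determinant $M_3=\det(I_3-G_\alpha)$.

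The crux is the computation of $M_3$. Expanding along the first row of $I_3-G_\alpha$ and substituting $1-\eta=n\alpha s$, $1-(\sigma+\alpha cd\epsilon ly_-)=(1-\sigma)-\alpha cd\epsilon ly_-$, the determinant factors as $\alpha$ times a concave quadratic $q(\alpha)$. Collecting the five resulting monomials, one finds $q(\alpha)=ns(1-\sigma)^2-\Delta\,\alpha-cd\epsilon l^2yy_-^2(l+ns)\,\alpha^2$, where $\Delta=nscd\epsilon ly_-(1-\sigma+\tau)$ is exactly the constant named in the statement. The constant term $ns(1-\sigma)^2$ is positive and the leading coefficient is negative, so $q$ has a unique positive root; applying the quadratic formula to $q(\alpha)=0$ produces precisely the first argument of the $\min$ in~\eqref{alpha_ub}. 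Hence $M_3>0$ if and only if $\alpha$ lies below that root, and intersecting with $\alpha<\tfrac1{nl}$ yields $\rho(G_\alpha)<1$ exactly for $\alpha\in(0,\overline{\alpha})$.

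The main obstacle is purely computational: the cofactor expansion of $M_3$ must be carried out carefully and its monomials regrouped so that the coefficients collapse into $\Delta$ and $cd\epsilon l^2yy_-^2(l+ns)$, since a single sign or bookkeeping slip would break the clean match with~\eqref{alpha_ub}. A minor conceptual point to confirm is that the $Z$-matrix/$M$-matrix equivalence (all leading principal minors positive $\Leftrightarrow\rho<1$) applies without any irreducibility hypothesis on $G_\alpha$, which it does. One could instead argue directly on the characteristic polynomial $p(\lambda)=\det(\lambda I_3-G_\alpha)$, using that $\rho(G_\alpha)$ is a real Perron root and checking $p(1)>0$ together with the behavior of $p$ beyond the Perron root; however, the leading-principal-minor route is cleaner and delivers the quadratic, and hence the explicit threshold $\overline{\alpha}$, most directly.
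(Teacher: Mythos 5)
Your proof is correct, and it takes a genuinely different route from the paper's, although the decisive computation turns out to be identical. The paper argues by eigenvalue continuity: it notes that $G_0$ has eigenvalues $\{\sigma,\sigma,1\}$, implicitly differentiates the characteristic equation \eqref{det} to get $\frac{dq}{d\alpha}\big|_{\alpha=0,\,q=1}=-ns<0$ (so the unit eigenvalue moves inward as $\alpha$ increases from $0$), and then determines all step-sizes at which $G_\alpha$ can have an eigenvalue exactly equal to $1$ — namely $\alpha=0$, a negative value, and $\alpha_3$ — concluding by continuity that $\rho(G_\alpha)<1$ on $(0,\overline{\alpha})$. You instead invoke the Z-matrix/nonsingular-M-matrix equivalence, reducing $\rho(G_\alpha)<1$ to positivity of the three leading principal minors of $I_3-G_\alpha$. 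The two proofs meet at the same polynomial: your expansion $\det(I_3-G_\alpha)=\alpha\bigl(ns(1-\sigma)^2-\Delta\alpha-cd\epsilon l^2yy_-^2(l+ns)\alpha^2\bigr)$ is exactly the paper's equation \eqref{det} evaluated at $q=1$, and the positive root of your quadratic is the paper's $\alpha_3$, so your threshold matches \eqref{alpha_ub}; I verified the expansion and the quadratic's coefficients are right. What your route buys: it is purely algebraic and closes a step the paper leaves implicit — the continuity argument needs Perron--Frobenius (for a nonnegative matrix the spectral radius is itself an eigenvalue) to rule out a complex pair exiting the unit disk without $1$ ever being an eigenvalue — and, as you correctly note, the leading-principal-minor criterion requires no irreducibility. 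What the paper's route buys: it stays within elementary eigenvalue continuity without importing M-matrix theory, and the derivative $-ns$ gives quantitative intuition for how $\rho(G_\alpha)$ dips below $1$ near $\alpha=0$, which the paper later exploits when discussing the choice of $\alpha$ minimizing $\rho(G_\alpha)$. One small caveat on your wording: the claim that $\rho(G_\alpha)<1$ holds \emph{exactly} on $(0,\overline{\alpha})$ is justified only when the $\min$ in \eqref{alpha_ub} is attained by your quadratic's root; if it is attained by $\frac{1}{nl}$, step-sizes slightly above $\overline{\alpha}$ are not excluded by your analysis (your resolution $\eta=1-n\alpha s$ breaks down there) — but since the lemma asserts only sufficiency, this does not affect correctness.
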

\begin{proof}
	First, if~$\alpha<\frac{1}{nl}$ then~$\eta=1-\alpha ns,$ since~$l\geq s$ (see e.g.,~\cite{opt_literature0}: Chapter 3 for details). When~$\alpha=0$, we have that
	{\color{black}
	\begin{align}
	G_0&=\left[
	\begin{array}{ccc}
	\sigma & 0 & 0 \\
	0 & 1 & 0\\
	cd\epsilon l\tau y_- & 0 & \sigma
	\end{array}
	\right]
	,\end{align}
   }
	the eigenvalues of which are~$\sigma$,~$\sigma$, and~$1$. Hence,~$\rho(G_0)=1$. We now consider how the eigenvalue of~$1$ is changed if we slightly increase~$\alpha$ from~$0$. Let~$\mc{P_{G_\alpha}}(q)=\mbox{det}(qI_n-G_\alpha)$, i.e., the characteristic polynomial of~$G_\alpha$. Setting~$\mbox{det}(qI_n-G_\alpha)=0$, we get the following equation.
	{\color{black}
	\begin{align}\label{det}
	((q-\sigma)^2-\alpha cd\epsilon ly_-(q-\sigma))(q-1+n\alpha s)-\alpha^3cd\epsilon l^3 yy_-^2&\nonumber\\
	-\alpha(q-1+n\alpha s)(cd\epsilon l\tau y_-+\alpha(cd\epsilon l^2yy_-^2))=0&.
	\end{align}
    }Since we have already shown that~$1$ is one of the eigenvalues of~$G_0$, Eq.~\eqref{det} holds when~$q=1$ and~$\alpha=0$. By taking the derivative on both sides of Eq.~\eqref{det}, with~$q=1$ and~$\alpha=0$, we obtain that{\color{black}~$\frac{d q}{d\alpha}|_{\alpha=0,q=1}=-ns<0$.} This leads to the fact that when~$\alpha$ slightly increases from~$0$,~$\rho(G_\alpha)<1$ since the eigenvalues are continuous functions of the parameters of a matrix.
	
	We next calculate all possible values of~$\alpha$ for which~$G_\alpha$ has an eigenvalue of~$1$. Let~$q=1$ in Eq.~\eqref{det} and solve for the step-size,~$\alpha$; we obtain three solutions: ~$\alpha_1=0$,~$\alpha_2<0$, and{\color{black}~$$\alpha_3=\frac{\sqrt{\Delta^2+4ns(1-\sigma)^2cd\epsilon l^2yy_-^2(l+ns)}-\Delta}{2cd\epsilon l^2yy_-^2(l+ns)}>0.$$}Since there are no other values of~$\alpha$ with which~$G_\alpha$ has an eigenvalue of $1$, all eigenvalues of~$G_\alpha$ are less than~$1$, i.e.,~$\rho(G_\alpha)<1$, when~$\alpha\in(0,\overline{\alpha})$.
\end{proof}
\noindent We note that~$\bar{\alpha}$ depends on the global knowledge and it may not be possible to precisely compute it in a distributed fashion. However, this value may be estimated as we will show in Section~\ref{s6}, see e.g.,~\cite{EXTRA}, for a similar approach.

\begin{lem}\label{lem_GH}
	With the step-size,~$\alpha\in(0,\overline{\alpha})$, where~$\overline{\alpha}$ is defined in Eq.~\eqref{alpha_ub}, the following statements hold:~$\forall k$,
	\begin{enumerate}[label=(\alph*)]
		\item
		there exists~$0<\gamma_1<1$ and~$0<\Gamma_1<\infty$, where~$\gamma_1$ is defined in Eq.~\eqref{DkDinfty1}, such that
	    $$\left\|H_k\right\|_2=\Gamma_1\gamma_1^k;$$
		\item 
		there exists~$0<\gamma_2<1$ and~$0<\Gamma_2<\infty$, such that
		$$\left\|G^k\right\|_2\leq\Gamma_2\gamma_2^k;$$
		\item 
		let~$\gamma=\max\{\gamma_1,\gamma_2\}$ and~$\Gamma=\Gamma_1\Gamma_2/\gamma$, such that for all~$0\leq r\leq k-1$,
		$$\left\|G^{k-r-1}H_r\right\|_2\leq\Gamma\gamma^k.$$
	\end{enumerate}
\end{lem}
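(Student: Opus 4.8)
The plan is to treat the three parts in sequence, since part (c) is an immediate consequence of parts (a) and (b). Part (a) is a direct computation exploiting the sparse structure of~$H_k$; part (b) is the essential step, converting the spectral bound~$\rho(G)<1$ supplied by Lemma~\ref{lem_G} into a uniform geometric decay of the powers~$G^k$ in the Euclidean norm; part (c) then follows by submultiplicativity of~$\|\cdot\|_2$ together with the choice~$\gamma=\max\{\gamma_1,\gamma_2\}$.

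For part (a), I would observe that~$H_k$ has nonzero entries only in its first column and that every such entry carries the common factor~$\gamma_1^{k-1}$. Hence~$H_k=\gamma_1^{k-1}\overline{H}$, where~$\overline{H}$ is the constant matrix obtained by stripping off the~$\gamma_1^{k-1}$ factor. Since scaling a matrix by a positive scalar scales its~$2$-norm by the same factor, $\|H_k\|_2=\gamma_1^{k-1}\|\overline{H}\|_2$. As~$\overline{H}$ has a single nonzero column, its largest singular value equals the Euclidean length of that column, giving~$\|\overline{H}\|_2=T\sqrt{(\alpha l y_-)^2+((\alpha l y+2)d\epsilon l y_-^2)^2}$. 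Setting~$\Gamma_1=\gamma_1^{-1}\|\overline{H}\|_2$ then yields~$\|H_k\|_2=\Gamma_1\gamma_1^k$ with~$0<\Gamma_1<\infty$ and the same~$\gamma_1\in(0,1)$ of Eq.~\eqref{DkDinfty1}.

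For part (b), Lemma~\ref{lem_G} guarantees~$\rho(G)<1$ whenever~$\alpha\in(0,\overline{\alpha})$. I would fix any~$\gamma_2$ with~$\rho(G)<\gamma_2<1$. A standard fact (see~\cite{hornjohnson:13}: Chapter~5) is that for such~$\gamma_2$ there exists a submultiplicative matrix norm~$\|\cdot\|'$, induced by a vector norm, with~$\|G\|'\leq\gamma_2$; consequently~$\|G^k\|'\leq(\|G\|')^k\leq\gamma_2^k$. By the equivalence of norms on the finite-dimensional space of~$3\times3$ matrices, there is a constant~$c_1$ with~$\|\cdot\|_2\leq c_1\|\cdot\|'$, so~$\|G^k\|_2\leq c_1\gamma_2^k$; taking~$\Gamma_2=c_1$ proves the claim. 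The point requiring care here is that~$G$ is in general non-normal, so one cannot bound~$\|G^k\|_2$ directly by~$\rho(G)^k$; choosing~$\gamma_2$ strictly above~$\rho(G)$ is precisely what absorbs the transient growth coming from the non-normality (equivalently, from the Jordan structure of~$G$). This is the main obstacle of the lemma, modest though it is.

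Finally, for part (c), I would combine the two estimates via submultiplicativity: for~$0\leq r\leq k-1$,
\begin{align}
\|G^{k-r-1}H_r\|_2 &\leq \|G^{k-r-1}\|_2\,\|H_r\|_2 \nonumber\\
&\leq \Gamma_2\gamma_2^{\,k-r-1}\cdot\Gamma_1\gamma_1^{\,r}. \nonumber
\end{align}
With~$\gamma=\max\{\gamma_1,\gamma_2\}$ one has~$\gamma_2^{\,k-r-1}\leq\gamma^{\,k-r-1}$ and~$\gamma_1^{\,r}\leq\gamma^{\,r}$, so the right-hand side is at most~$\Gamma_1\Gamma_2\gamma^{\,k-1}=(\Gamma_1\Gamma_2/\gamma)\gamma^k=\Gamma\gamma^k$, uniformly in~$r$, exactly matching the constant~$\Gamma=\Gamma_1\Gamma_2/\gamma$ in the statement. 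Parts (a) and (c) are routine once the structure of~$H_k$ and the decay of~$G^k$ are in hand.
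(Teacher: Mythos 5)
Your proof is correct and follows essentially the same route as the paper: computing $\|H_k\|_2$ exactly from its single nonzero column, converting $\rho(G)<1$ (from Lemma~\ref{lem_G}) into geometric decay of $\|G^k\|_2$ via a matrix norm with value below one plus norm equivalence, and combining the two by submultiplicativity for part (c). You merely spell out the details (the Horn--Johnson norm construction, the explicit $\gamma_2^{k-r-1}\gamma_1^{r}\leq\gamma^{k-1}$ chain) that the paper leaves implicit.
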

\begin{proof}~

\begin{enumerate}[label=(\alph*)]
\item This can be verified according to Eq.~\eqref{t} and by letting
    {\color{black}
	$$\Gamma_1=\frac{1}{\gamma_1}\sqrt{(\alpha ly_-T)^2+(\alpha yl+2)^2(d \epsilon ly_-^2T)^2}.$$
    }
{\color{black}
\item Note that $\rho(G)<1$ when $\alpha\in(0,\overline{\alpha})$. Therefore, the value of some matrix norm of~$G$, denoted by $\gamma_2$, is strictly less than 1. Since all matrix norms are equivalent, we have $\|G^k\|_2\leq\Gamma_2\gamma_2^{k}$, for some positive constant $\Gamma_2$.
}

\item The proof of (c) is achieved by combining (a) and (b). 
\end{enumerate}
\end{proof}
{\color{black}
\begin{lem}\label{lem_polyak}
	(Polyak~\cite{polyak1987introduction}) If nonnegative sequences ~$\{v_k\}$,~$\{u_k\}$,~$\{b_k\}$ and~$\{c_k\}$ are such that ~$\sum_{k=0}^{\infty} b_k < \infty$,~$\sum_{k=0}^{\infty} c_k < \infty$ and
	$$v_{k+1} \le (1+b_k)v_k - u_k + c_k, \quad \forall t\ge 0,$$ then~$\{v_k\}$ converges and~$\sum_{k=0}^{\infty} u_k < \infty$.
\end{lem}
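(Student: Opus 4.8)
The plan is to prove this deterministic analogue of the Robbins--Siegmund lemma by absorbing the multiplicative factor $(1+b_k)$ into a rescaling, after which the recursion becomes an almost-nonincreasing one amenable to a monotone-convergence argument. First I would exploit $\sum_{k=0}^\infty b_k<\infty$ to define the partial products $B_k=\prod_{j=0}^{k-1}(1+b_j)$ (with $B_0=1$) and note that, since $\log(1+b_j)\le b_j$, the increasing sequence $\{B_k\}$ is bounded above; hence it converges to a finite limit $B_\infty$ with the two-sided bound $1\le B_k\le B_\infty<\infty$ for all $k$.

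The key step is the change of variables $w_k = v_k/B_k$. Dividing the hypothesis $v_{k+1}\le(1+b_k)v_k-u_k+c_k$ by $B_{k+1}=(1+b_k)B_k$ yields
$$w_{k+1}\le w_k-\widetilde u_k+\widetilde c_k,\qquad \widetilde u_k=\frac{u_k}{B_{k+1}}\ge 0,\quad \widetilde c_k=\frac{c_k}{B_{k+1}}\ge 0,$$
where $\widetilde c_k\le c_k$ because $B_{k+1}\ge 1$, so that $\sum_k\widetilde c_k<\infty$. This removes the troublesome multiplier and reduces the problem to a clean supermartingale-type inequality in $w_k$.

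To establish convergence of $\{w_k\}$, I would introduce $p_k = w_k-\sum_{j=0}^{k-1}\widetilde c_j$. Dropping the nonpositive term $-\widetilde u_k$ gives $w_{k+1}\le w_k+\widetilde c_k$, i.e. $p_{k+1}\le p_k$, so $\{p_k\}$ is nonincreasing; it is bounded below by $-\sum_{j=0}^\infty\widetilde c_j>-\infty$, hence converges. Since the partial sums of the $\widetilde c_j$ converge, $w_k=p_k+\sum_{j=0}^{k-1}\widetilde c_j$ converges, and therefore $v_k=B_k w_k$ converges because $B_k\to B_\infty$. For the summability of $\{u_k\}$, I would telescope the rearranged inequality $\widetilde u_k\le w_k-w_{k+1}+\widetilde c_k$ from $k=0$ to $N$, obtaining $\sum_{k=0}^N\widetilde u_k\le w_0-w_{N+1}+\sum_{k=0}^N\widetilde c_k\le w_0+\sum_{k=0}^\infty\widetilde c_k<\infty$ using $w_{N+1}\ge 0$; then $u_k=B_{k+1}\widetilde u_k\le B_\infty\widetilde u_k$ delivers $\sum_k u_k<\infty$.

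The argument is essentially routine, and the only point requiring care is the normalization, since both conclusions must survive the division by $B_{k+1}$. This works precisely because $\sum_k b_k<\infty$ forces $B_\infty$ to be finite and bounded away from zero, so scaling by $B_k$ preserves convergence of $\{v_k\}$ as well as finiteness of $\sum_k u_k$. Accordingly, I expect the main (mild) obstacle to be verifying the two-sided bound $1\le B_k\le B_\infty$ carefully, as every downstream estimate rests on it.
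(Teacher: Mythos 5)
Your proof is correct, and there is nothing in the paper to compare it against: the paper states this lemma as a known result, citing Polyak's \emph{Introduction to Optimization}, and gives no proof of its own. Your argument is the standard one for this deterministic Robbins--Siegmund-type lemma (and is essentially the argument in Polyak's text): the product normalization $B_k=\prod_{j=0}^{k-1}(1+b_j)$ is finite and bounded below by $1$ because $\sum_k b_k<\infty$, dividing by $B_{k+1}$ converts the recursion into the supermartingale-type inequality $w_{k+1}\le w_k-\widetilde u_k+\widetilde c_k$, and then monotone convergence of $p_k=w_k-\sum_{j<k}\widetilde c_j$ (nonincreasing, bounded below by $-\sum_j\widetilde c_j$ since $w_k\ge 0$) gives convergence of $w_k$ and hence of $v_k=B_kw_k$, while telescoping yields $\sum_k\widetilde u_k\le w_0+\sum_k\widetilde c_k<\infty$ and thus $\sum_k u_k\le B_\infty\sum_k\widetilde u_k<\infty$. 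All the estimates you flag as needing care ($1\le B_k\le B_\infty$, $\widetilde c_k\le c_k$, $u_k\le B_\infty\widetilde u_k$) do hold, so the proof is complete and self-contained, which is more than the paper provides for this statement.
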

}

We now present the main result of this paper in Theorem~\ref{main_result}, which shows the linear convergence rate of ADD-OPT.
\begin{theorem}\label{main_result}
	Let the Assumptions \ref{asp2}-\ref{asp3} hold. With the step-size,~$\alpha\in(0,\overline{\alpha})$, where~$\overline{\alpha}$ is defined in Eq.~\eqref{alpha_ub}, the sequence,~$\{\mb{z}_k\}$, generated by ADD-OPT, converges exactly to the unique optimizer,~$\mb{z}^*$, at a linear rate, i.e., {\color{black}there exist some positive constant~$M>0$, such that for any~$k$,
	\begin{align}
	\left\|\mb{z}_k-\mb{z}^*\right\|_2\leq M(\gamma+\xi)^k,
	\end{align}
	where~$\gamma$ is used in Lemma \ref{lem_GH}(c) and~$\xi$ is a arbitrarily small constant. 
    }
\end{theorem}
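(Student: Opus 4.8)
The plan is to convert the one-step matrix inequality of Lemma~\ref{thm1} into a scalar contraction, use it \emph{first} to show that the driving sequence $\mb{s}_k$ (i.e.\ $\|\mb{x}_k\|_2$) stays bounded, and only then extract the explicit linear rate. Throughout, the step-size restriction $\alpha\in(0,\overline{\alpha})$ guarantees $\rho(G)<1$ by Lemma~\ref{lem_G}, which is what makes all the estimates below work.

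I would begin by unrolling \eqref{thm1_eq}. Since $G$ and $H_k$ have nonnegative entries and $\mb{t}_k,\mb{s}_k\geq\mb{0}$, iterating preserves the inequality componentwise, giving
\begin{align}
\mb{t}_k\leq G^k\mb{t}_0+\sum_{r=0}^{k-1}G^{k-1-r}H_r\mb{s}_r.\nonumber
\end{align}
Taking Euclidean norms and invoking Lemma~\ref{lem_GH}(b),(c) yields
\begin{align}
\left\|\mb{t}_k\right\|_2\leq\Gamma_2\gamma_2^k\left\|\mb{t}_0\right\|_2+\Gamma\gamma^k\sum_{r=0}^{k-1}\left\|\mb{s}_r\right\|_2,\nonumber
\end{align}
so the whole argument reduces to controlling $\|\mb{s}_r\|_2=\|\mb{x}_r\|_2$.

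The main obstacle is exactly this boundedness: $\mb{s}_k$ forces the recursion through $H_k$, yet a priori $\|\mb{x}_k\|_2$ is not known to be bounded, and naive induction fails because the contraction from $\rho(G)<1$ need not beat the linear growth of $\sum_{r<k}\|\mb{x}_r\|_2$. To break this coupling I would pass to the norm of Lemma~\ref{lem3} (and its compatible vector norm), in which $\|G\|\leq\gamma_2<1$. Set $v_k=\|\mb{t}_k\|$. Norm equivalence together with the elementary splitting $\|\mb{x}_k\|_2\leq\|\mb{x}_k-Y_\infty\overline{\mb{x}}_k\|_2+\|Y_\infty\|_2\|\overline{\mb{x}}_k\|_2$ and $\|\overline{\mb{x}}_k\|_2\leq\|\overline{\mb{x}}_k-\mb{z}^*\|_2+\sqrt{n}\,\|\ul{\mb{z}}^*\|_2$ give $\|\mb{x}_k\|_2\leq C_1v_k+C_2$ for positive constants $C_1,C_2$. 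Combining this with \eqref{thm1_eq} and the bound $\|H_{k-1}\mb{s}_{k-1}\|\leq\beta\gamma_1^{k-1}\|\mb{x}_{k-1}\|_2$ (Lemma~\ref{lem_GH}(a) with norm equivalence, for some $\beta>0$) produces
\begin{align}
v_k\leq(1+b_{k-1})v_{k-1}-u_{k-1}+c_{k-1},\nonumber
\end{align}
with $b_{k-1}=\beta C_1\gamma_1^{k-1}$, $u_{k-1}=(1-\gamma_2)v_{k-1}\geq0$, and $c_{k-1}=\beta C_2\gamma_1^{k-1}$. Since $0<\gamma_1<1$, both $\sum_kb_k<\infty$ and $\sum_kc_k<\infty$, so Polyak's Lemma~\ref{lem_polyak} applies and $\{v_k\}$ converges; in particular $\{\|\mb{x}_k\|_2\}$ is bounded, say by $B$.

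With boundedness secured the rate is immediate. Substituting $\|\mb{s}_r\|_2\leq B$ into the unrolled bound gives $\|\mb{t}_k\|_2\leq\Gamma_2\gamma_2^k\|\mb{t}_0\|_2+\Gamma B\,k\gamma^k$, and since $k(\gamma/(\gamma+\xi))^k\to0$ we have $k\gamma^k\leq M''(\gamma+\xi)^k$ for any $\xi>0$ and a suitable $M''$, whence $\|\mb{t}_k\|_2\leq M'(\gamma+\xi)^k$. Finally I would transfer this to $\mb{z}_k-\mb{z}^*=Y_k^{-1}\mb{x}_k-\mb{z}^*$ through the splitting
\begin{align}
\mb{z}_k-\mb{z}^*=Y_k^{-1}(\mb{x}_k-Y_\infty\overline{\mb{x}}_k)+Y_k^{-1}(Y_\infty-Y_k)\overline{\mb{x}}_k+(\overline{\mb{x}}_k-\mb{z}^*),\nonumber
\end{align}
bounding the three terms by $y_-\,c\,[\mb{t}_k]_1$, by $y_-\,T\gamma_1^k\|\overline{\mb{x}}_k\|_2$ using $\|Y_k-Y_\infty\|_2\leq T\gamma_1^k$ from Lemma~\ref{lem2}, and by $[\mb{t}_k]_2$, respectively, where $\|\overline{\mb{x}}_k\|_2$ is bounded by the previous step. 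Each term is $O((\gamma+\xi)^k)$, so $\|\mb{z}_k-\mb{z}^*\|_2\leq M(\gamma+\xi)^k$, completing the proof.
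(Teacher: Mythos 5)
Your proposal follows the same skeleton as the paper's proof --- unroll \eqref{thm1_eq}, control $\|\mb{s}_r\|_2$ by $\|\mb{t}_r\|$ plus a constant, invoke Polyak's Lemma~\ref{lem_polyak} to obtain boundedness, then absorb the factor $k\gamma^k$ into $(\gamma+\xi)^k$ and transfer to $\mb{z}_k$ through $Y_k^{-1}$ --- but you deploy Polyak's lemma differently, and the difference is worth recording. The paper applies it to the partial sums $v_k=\sum_{r=0}^{k-1}\|\mb{t}_r\|_2$ with $u_k=0$: the unrolled bound gives $v_{k+1}\le(1+b\gamma^k)v_k+s_k\gamma^k$, where the common factor $\gamma^k$ is what makes both perturbation terms summable. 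You instead apply it to the one-step recursion $\mb{t}_k\le G\mb{t}_{k-1}+H_{k-1}\mb{s}_{k-1}$ in a norm where $G$ contracts, taking $u_{k-1}=(1-\gamma_2)\|\mb{t}_{k-1}\|$; this yields not merely boundedness but summability of $\|\mb{t}_k\|$ before any rate is extracted, a slightly stronger and arguably cleaner intermediate statement. Both routes then finish essentially identically.

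There is, however, one genuine technical gap in your version. From the \emph{componentwise} inequality $\mb{t}_k\le G\mb{t}_{k-1}+H_{k-1}\mb{s}_{k-1}$ you cannot pass to $\|\mb{t}_k\|\le\gamma_2\|\mb{t}_{k-1}\|+\|H_{k-1}\mb{s}_{k-1}\|$ unless the vector norm is \emph{monotone} on the nonnegative orthant, i.e., $\mb{0}\le\mb{a}\le\mb{b}$ implies $\|\mb{a}\|\le\|\mb{b}\|$. The norm you cite does not obviously have this property: the norm of Lemma~\ref{lem3} lives on $\mbb{R}^{np}$ and concerns $A-A_\infty$, not the $3\times3$ matrix $G$; the relevant statement is Lemma~\ref{lem_GH}(b), and the Horn--Johnson-type norm witnessing $\|G\|=\gamma_2<1$ there is built from a similarity transformation and need not be monotone. (The paper never faces this issue because it only takes Euclidean norms of componentwise inequalities between nonnegative vectors, and $\|\cdot\|_2$ \emph{is} monotone on the nonnegative orthant.) The gap is fixable precisely because $G$ is entrywise nonnegative with $\rho(G)<1$: pick $\gamma_2\in(\rho(G),1)$ and set $\mb{w}=\sum_{j\ge0}\gamma_2^{-j}G^j\mb{1}_3$, which converges, satisfies $\mb{w}\ge\mb{1}_3$, and obeys $G\mb{w}=\gamma_2(\mb{w}-\mb{1}_3)\le\gamma_2\mb{w}$; the weighted max-norm $\|\mb{a}\|_{\mb{w}}=\max_i|a_i|/w_i$ is then monotone and satisfies $\|G\mb{a}\|_{\mb{w}}\le\gamma_2\|\mb{a}\|_{\mb{w}}$, after which your argument goes through verbatim.
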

\begin{proof}
We write Eq.~\eqref{thm1_eq} recursively, leading to
	\begin{align}\label{thm2_eq1}
	\mb{t}_k\leq&G^k\mb{t}_0+\sum_{r=0}^{k-1}G^{k-r-1}H_r\mb{s}_r.
	\end{align}
By taking the norm on both sides of Eq.~\eqref{thm2_eq1} and considering Lemma \ref{lem_GH}, we obtain that
	\begin{align}\label{thm2_eq2}
	\left\|\mb{t}_k\right\|_2\leq&\left\|G^k\right\|_2\left\|\mb{t}_0\right\|_2+\sum_{r=0}^{k-1}\left\|G^{k-r-1}H_r\right\|_2\left\|\mb{s}_r\right\|_2,\nonumber\\
	\leq&\Gamma_2\gamma_2^k\left\|\mb{t}_0\right\|_2+\sum_{r=0}^{k-1}\Gamma\gamma^k\left\|\mb{s}_r\right\|_2,
	\end{align}
in which we can bound~$\|\mb{s}_r\|_2$ as
\begin{align}
\left\|\mb{s}_r\right\|_2\leq&\left\|\mb{x}_r-Y_\infty\overline{\mb{x}}_r\right\|_2+\left\|Y_\infty\right\|_2\left\|\overline{\mb{x}}_r-\mb{z}^*\right\|_2+\left\|Y_\infty\right\|_2\left\|\mb{z}^*\right\|_2\nonumber,\\
\leq&(c+y)\left\|\mb{t}_r\right\|_2+y\left\|\mb{z}^*\right\|_2.
\end{align}
Therefore, we have that for all~$k$
	\begin{align}\label{thm2_eq3}
	\left\|\mb{t}_k\right\|_2\leq&\bigg{(}\Gamma_2\|\mb{t}_0\|_2+\Gamma(c+y)\sum_{r=0}^{k-1}\|\mb{t}_r\|_2+\Gamma yk\|\mb{z}^*\|_2\bigg{)}\gamma^k.
	\end{align}
%
    {\color{black}
	Denote~$v_k=\sum_{r=0}^{k-1}\|\mathbf{t}_r\|_2$,~$s_k=\Gamma_2\|\mathbf{t}_0\|_2+\Gamma yk\|\mb{z}^*\|_2$, and~$b=\Gamma (c+y)$, then Eq. \eqref{thm2_eq3} can be written as
	\begin{align}\label{thm2_newEq}
	\|\mathbf{t}_{k}\|_2 = v_{k+1} - v_k\leq(s_k + bv_k)\gamma^k,
	\end{align}
	which implies that~$v_{k+1} \le (1+b\gamma^k)v_k + s_k\gamma^k$. Applying Lemma~\ref{lem_polyak} with~$b_k = b\gamma^k$ and~$c_k = s_k\gamma^k$ (here~$u_k=0$), we have that~$v_k$ converges\footnote{In order to apply Lemma~\ref{lem_polyak}, we need to show that~$\sum_{k=0}^\infty s_k\gamma^k<\infty$, which follows from the fact that~$\lim_{k\rightarrow\infty}\frac{s_{k+1}\gamma^{k+1}}{s_{k}\gamma^{k}}=\gamma<1.$}.
	and therefore is bounded. By Eq.~\eqref{thm2_newEq},~$\forall \mu \in (\gamma,1)$ we have
	\begin{equation}
	\lim_{k\rightarrow\infty}\frac{\|\mathbf{t}_{k}\|_2}{\mu^k} \leq
	\lim_{k\rightarrow\infty}\frac{(s_k + bv_k)\gamma^k}{\mu^k}=0.
	\end{equation} 
	Therefore,~$\|\mathbf{t}_{k}\|_2=O(\mu^k)$. In other words, there exists some positive constant~$\Phi$ such that for all~$k$, we have: 
	\begin{align}\label{thm2_eq6}
	\left\|\mb{t}_k\right\|_2\leq&\Phi(\gamma+\xi)^{k},
	\end{align}
	where~$\xi$ is a arbitrarily small constant.
	Moreover,~$\left\|\mb{z}_k-\mb{z}^*\right\|_2$ and~$\left\|\mb{t}_k\right\|_2$ satisfy the relation that
	\begin{align}\label{thm2_eq7}
	\left\|\mb{z}_k-\mb{z}^*\right\|_2\leq&\left\|Y_k^{-1}\mb{x}_k-Y_k^{-1}Y_\infty\overline{\mb{x}}_k\right\|_2+\left\|Y_k^{-1}Y_\infty\mb{z}^*-\mb{z}^*\right\|_2\nonumber\\
	&+\left\|Y_k^{-1}Y_\infty\overline{\mb{x}}_k-Y_k^{-1}Y_\infty\mb{z}^*\right\|_2,\nonumber\\
	\leq&y_-(c+y)\left\|\mb{t}_k\right\|_2+y_-T\gamma_1^{k}\left\|\mb{z}^*\right\|_2,
	\end{align}
	where in the second inequality we use the relation~$$\|Y_{k}^{-1}Y_\infty-I_{np}\|_2\leq\|Y_{k}^{-1}\|_2\|Y_\infty-Y_{k}\|_2\leq y_-T\gamma_1^{k},$$ achieved from Eq. \eqref{DkDinfty1}. By combining Eqs.~\eqref{thm2_eq6} and \eqref{thm2_eq7}, we obtain that
	\begin{align}
	\left\|\mb{z}_k-\mb{z}^*\right\|_2\leq&\Big(y_-(c+y)\Phi+y_-T\|\mb{z}^*\|_2\Big)(\gamma+\xi)^k,\nonumber
	\end{align}
	where $\xi$ is a arbitrarily small constant.
	The proof of theorem is completed by letting~$M=y_-(c+y)\Phi+y_-T\|\mb{z}^*\|_2$.
    }
\end{proof}
\noindent Theorem \ref{main_result} shows the linear convergence rate of ADD-OPT. Although ADD-OPT works for a small enough step-size, how small is sufficient may require some estimation of the upper bound, which we discuss this in Section~\ref{s6}. This notion of sufficiently small step-sizes is not uncommon in the literature, see e.g.,~\cite{uc_Nedic, opdirect_Nedic}. Next, each agent must agree on the same value of step-size that may be pre-programmed to avoid implementing an agreement protocol. We now prove Lemma \ref{thm1} in Sections \ref{s4} and \ref{s5}.

\section{Auxiliary Relations}\label{s4}
We provide several basic relations in this section, which will help the proof of Lemma \ref{thm1}. Lemma \ref{w-x-} derives iterative equations that govern the average sequences,~$\overline{\mb{x}}_k$ and~$\overline{\mb{w}}_k$. Lemma \ref{yy-} gives inequalities that are direct consequences of Eq.~\eqref{DkDinfty1}. Lemma~\ref{gd_approach} can be found in the standard optimization literature, see e.g.,~\cite{opt_literature0}. It states that if we perform a gradient-descent step with a fixed step-size for a smooth, strongly-convex function, then the distance to optimizer shrinks by at least a fixed ratio.
	
\begin{lem}\label{w-x-}
	Recall $\overline{\mb{x}}_k$ from Eq.~\eqref{eqxb} and~$\overline{\mb{w}}_k$ from Eq.~\eqref{eqwb}. The following equations hold for all~$k$,
	\begin{enumerate}[label=(\alph*)]
		\item
		$\overline{\mb{w}}_k=\mb{g}_k$;
		\item 
		$\overline{\mb{x}}_{k+1}=\overline{\mb{x}}_k-\alpha\mb{g}_k$.
	\end{enumerate}
\end{lem}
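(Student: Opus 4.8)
The plan is to exploit the column-stochasticity of $A$, which is exactly the left-eigenvector identity $(\mb{1}_n^\top\otimes I_p)A = \mb{1}_n^\top\otimes I_p$: since $\ul{A}$ is column-stochastic we have $\mb{1}_n^\top\ul{A} = \mb{1}_n^\top$, and the mixed-product rule for Kronecker products gives $(\mb{1}_n^\top\otimes I_p)(\ul{A}\otimes I_p) = (\mb{1}_n^\top\ul{A})\otimes I_p = \mb{1}_n^\top\otimes I_p$. Both claims then follow by left-multiplying the matrix updates in Eq.~\eqref{alg1_matrix} by the projection $\frac{1}{n}(\mb{1}_n\otimes I_p)(\mb{1}_n^\top\otimes I_p)$ and using this identity to eliminate the mixing matrix $A$.

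For part (a), I would first apply $(\mb{1}_n^\top\otimes I_p)$ to the gradient-tracking update \eqref{alg1_md}; the identity above removes $A$, yielding $(\mb{1}_n^\top\otimes I_p)\mb{w}_{k+1} = (\mb{1}_n^\top\otimes I_p)\mb{w}_k + (\mb{1}_n^\top\otimes I_p)(\nabla\mb{f}_{k+1}-\nabla\mb{f}_k)$. Telescoping this from $0$ to $k$ and invoking the initialization $\mb{w}_0 = \nabla\mb{f}_0$ collapses the sum to $(\mb{1}_n^\top\otimes I_p)\mb{w}_k = (\mb{1}_n^\top\otimes I_p)\nabla\mb{f}_k$. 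Left-multiplying by $\frac{1}{n}(\mb{1}_n\otimes I_p)$ gives exactly $\overline{\mb{w}}_k = \mb{g}_k$ (a one-line induction on $k$ achieves the same).

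For part (b), I would apply the same projection to the state update \eqref{alg1_ma}. The identity $(\mb{1}_n^\top\otimes I_p)A = \mb{1}_n^\top\otimes I_p$ turns $A\mb{x}_k$ into $\mb{x}_k$ under the projection, giving $\overline{\mb{x}}_{k+1} = \overline{\mb{x}}_k - \alpha\overline{\mb{w}}_k$, and substituting part (a), $\overline{\mb{w}}_k = \mb{g}_k$, completes the claim.

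This lemma is essentially bookkeeping, so I do not anticipate a genuine obstacle; the only point requiring care is the Kronecker-product algebra, so that column-stochasticity of $\ul{A}$ is correctly translated into the left-invariance of $\mb{1}_n^\top\otimes I_p$ under $A$. The conceptual payoff is part (b): the averaged iterate $\overline{\mb{x}}_k$ obeys an \emph{exact} centralized gradient step driven by the averaged gradient $\mb{g}_k$, and this hinges on (a) to replace the running $\mb{w}$-average by the gradient average.
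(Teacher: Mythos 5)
Your proposal is correct and follows essentially the same route as the paper's proof: both exploit the left-invariance identity $(\mb{1}_n^\top\otimes I_p)A=\mb{1}_n^\top\otimes I_p$, telescope the projected $\mb{w}$-update together with the initialization $\mb{w}_0=\nabla\mb{f}_0$ to obtain (a), and then project the $\mb{x}$-update and substitute (a) to obtain (b). The only cosmetic difference is that you telescope the $(\mb{1}_n^\top\otimes I_p)$-projected recursion and rescale at the end, whereas the paper carries the full averaging operator $\frac{1}{n}(\mb{1}_n\otimes I_p)(\mb{1}_n^\top\otimes I_p)$ throughout.
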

\begin{proof}
	Since~$A$ is column-stochastic, satisfying~$(\mb{1}_n^\top\otimes I_p) A=\mb{1}_n^\top\otimes I_p$, we obtain that
	\begin{align}
	\overline{\mb{w}}_k&=\frac{1}{n}(\mb{1}_n\otimes I_p)(\mb{1}_n^\top\otimes I_p)\left(A\mb{w}_{k-1}+\nabla\mb{f}_{k}-\nabla\mb{f}_{k-1}\right),\nonumber\\
	&=\overline{\mb{w}}_{k-1}+\mb{g}_k-\mb{g}_{k-1}.\nonumber
	\end{align}
	Do this recursively, and we have that
	\begin{align}
	\overline{\mb{w}}_k=\overline{\mb{w}}_{0}+\mb{g}_k-\mb{g}_{0}.\nonumber
	\end{align}
	Recall that we have the initial condition that~$\mb{w}_0=\nabla\mb{f}_0$, which is equivalent to~$\overline{\mb{w}}_0=\mb{g}_0$. Hence, we achieve the result of (a). The proof of (b) is obtained by the following derivation,
	\begin{align}
	\overline{\mb{x}}_{k+1}&=\frac{1}{n}(\mb{1}_n\otimes I_p)(\mb{1}_n^\top\otimes I_p)\left(A\mb{x}_{k}-\alpha\mb{w}_k\right)\nonumber\\
	&=\overline{\mb{x}}_k-\alpha\overline{\mb{w}}_k,\nonumber\\
	&=\overline{\mb{x}}_k-\alpha\mb{g}_k,\nonumber
	\end{align}
	where the last equation uses the result of (a).
\end{proof}

\begin{lem}\label{yy-}
	Recall Lemma~\ref{lem2},~$Y_k$ from Eq.~\eqref{Yk}, and $Y_\infty$ from Eq.~\eqref{yinf_eq}. The following inequalities hold for all~$k\geq 1$,
	\begin{enumerate}[label=(\alph*)]
		\item
		{\color{black}$\left\|Y_{k-1}^{-1}Y_\infty-I_{np}\right\|_2\leq y_-T\gamma_1^{k-1}$;}
		\item 
		$\left\|Y_{k}^{-1}-Y_{k-1}^{-1}\right\|_2\leq 2y_-^2T\gamma_1^{k-1}$,
	\end{enumerate}
where~$y_-$ is defined in Eq.~\eqref{parameter6}.
\end{lem}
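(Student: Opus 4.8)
The plan is to derive both bounds from two ingredients already in hand: the geometric decay $\|Y_k-Y_\infty\|_2\leq T\gamma_1^{k}$ furnished by Lemma~\ref{lem2}, and the uniform bound $y_-=\sup_k\|Y_k^{-1}\|_2$ from Eq.~\eqref{parameter6}. In both parts the strategy is the same: rewrite the quantity of interest as a product in which a difference of the $Y$-matrices appears as one factor, then apply submultiplicativity of the spectral norm and substitute the two ingredients.

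For part (a), I would first use $Y_{k-1}^{-1}Y_{k-1}=I_{np}$ to write $Y_{k-1}^{-1}Y_\infty-I_{np}=Y_{k-1}^{-1}(Y_\infty-Y_{k-1})$. Submultiplicativity then gives $\|Y_{k-1}^{-1}Y_\infty-I_{np}\|_2\leq\|Y_{k-1}^{-1}\|_2\,\|Y_\infty-Y_{k-1}\|_2$. Bounding the first factor by $y_-$ and the second by $T\gamma_1^{k-1}$, i.e.\ Lemma~\ref{lem2} applied at index $k-1$, yields the claimed inequality directly.

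For part (b), I would invoke the resolvent-type identity $Y_k^{-1}-Y_{k-1}^{-1}=Y_k^{-1}(Y_{k-1}-Y_k)Y_{k-1}^{-1}$, verified by expanding the right-hand side. Submultiplicativity gives $\|Y_k^{-1}-Y_{k-1}^{-1}\|_2\leq\|Y_k^{-1}\|_2\,\|Y_{k-1}-Y_k\|_2\,\|Y_{k-1}^{-1}\|_2\leq y_-^2\,\|Y_{k-1}-Y_k\|_2$. To control the middle factor I would insert $Y_\infty$ and use the triangle inequality, $\|Y_{k-1}-Y_k\|_2\leq\|Y_{k-1}-Y_\infty\|_2+\|Y_\infty-Y_k\|_2\leq T\gamma_1^{k-1}+T\gamma_1^{k}$. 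Since $0<\gamma_1<1$ forces $\gamma_1^{k}\leq\gamma_1^{k-1}$, the two terms combine to at most $2T\gamma_1^{k-1}$, and multiplying through by $y_-^2$ closes the argument.

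Neither step presents a genuine obstacle; both are bookkeeping consequences of Lemma~\ref{lem2}. The only points meriting care are the index shift (applying Lemma~\ref{lem2} at $k-1$ rather than $k$, which is what produces the $\gamma_1^{k-1}$ rate) and the legitimacy of replacing $\|Y_{k-1}^{-1}\|_2$ and $\|Y_k^{-1}\|_2$ by $y_-$. The latter rests on the finiteness of the supremum in Eq.~\eqref{parameter6}, which in turn is guaranteed because each $Y_k$ is invertible and $Y_k$ converges to the invertible limit $Y_\infty$, so the sequence $\{\|Y_k^{-1}\|_2\}$ is a convergent, hence bounded, sequence.
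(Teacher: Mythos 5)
Your proof is correct and follows essentially the same route as the paper: part (a) via the factorization $Y_{k-1}^{-1}Y_\infty-I_{np}=Y_{k-1}^{-1}(Y_\infty-Y_{k-1})$ with submultiplicativity and Lemma~\ref{lem2}, and part (b) via the resolvent identity together with the triangle inequality through $Y_\infty$ to produce the factor of $2$. Your closing remark on why $y_-$ is finite is a detail the paper leaves implicit, but it does not change the argument.
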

\begin{proof}
	By considering Eq.~\eqref{DkDinfty1}, it follows that
	{\color{black}
	\begin{align}
	\left\|Y_{k-1}^{-1}Y_\infty-I_{np}\right\|_2\leq\left\|Y_{k-1}^{-1}\right\|_2\left\|Y_\infty-Y_{k-1}\right\|_2\leq y_-T\gamma_1^{k-1}.\nonumber
	\end{align}
    }
	The proof of (b) follows by
	\begin{align}
	\left\|Y_{k}^{-1}-Y_{k-1}^{-1}\right\|_2&\leq\left\|Y_{k-1}^{-1}\right\|_2\left\|Y_{k-1}-Y_{k}\right\|_2\left\|Y_{k}^{-1}\right\|_2,\nonumber\\
	&\leq 2y_-^2T\gamma_1^{k-1},\nonumber
	\end{align}
	which completes the proof.
\end{proof}
\begin{lem}\label{gd_approach}
(Bubeck~\cite{opt_literature0})
	Let Assumption \ref{asp1} hold for the objective functions,~$f_i(\mb{z})$, in Problem P1, and let~$s$ and~$l$ be the strong-convexity and Lipschitz-continuity constants, respectively. For any~$\mb{z}\in\mbb{R}^p$, define~$\mb{z}_+=\mb{z}-\alpha\nabla \mb{f}(\mb{z})$, {\color{black}where~$0<\alpha<\frac{2}{nl}$. Then~$$\left\|\mb{z}_+-\ul{\mb{z}}^*\right\|_2\leq\eta\left\|\mb{z}-\ul{\mb{z}}^*\right\|_2,$$ where~$\eta=\max\left(\left|1-\alpha nl\right|,\left|1-\alpha ns\right|\right)$.}
\end{lem}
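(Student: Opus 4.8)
The plan is to exploit that the map $\mb{z}\mapsto\mb{z}_+=\mb{z}-\alpha\nabla f(\mb{z})$ is an ordinary gradient-descent step on the \emph{global} objective $f=\sum_{i=1}^n f_i$, which by Assumption~\ref{asp1} is $ns$-strongly convex and has an $nl$-Lipschitz gradient, and that $\ul{\mb{z}}^*$ is a fixed point of this step because $\nabla f(\ul{\mb{z}}^*)=\mb{0}_p$ by optimality (Assumption~\ref{asp3}). First I would subtract the fixed-point identity $\ul{\mb{z}}^*=\ul{\mb{z}}^*-\alpha\nabla f(\ul{\mb{z}}^*)$ from the update, writing the post-step error purely in terms of the pre-step error and a gradient difference,
\begin{align}
\mb{z}_+-\ul{\mb{z}}^* = (\mb{z}-\ul{\mb{z}}^*)-\alpha\big(\nabla f(\mb{z})-\nabla f(\ul{\mb{z}}^*)\big).\nonumber
\end{align}

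Next I would linearize the gradient difference via the mean-value (integral-of-Jacobian) representation $\nabla f(\mb{z})-\nabla f(\ul{\mb{z}}^*)=H\,(\mb{z}-\ul{\mb{z}}^*)$ with $H=\int_0^1\nabla^2 f\big(\ul{\mb{z}}^*+t(\mb{z}-\ul{\mb{z}}^*)\big)\,dt$, so that $\mb{z}_+-\ul{\mb{z}}^*=(I_p-\alpha H)(\mb{z}-\ul{\mb{z}}^*)$ and the claim reduces to bounding $\|I_p-\alpha H\|_2$. Since $ns$-strong convexity and $nl$-smoothness of $f$ give $ns\,I_p\preceq\nabla^2 f\preceq nl\,I_p$ pointwise, averaging over $t$ yields $ns\,I_p\preceq H\preceq nl\,I_p$; as $H$ is symmetric, $I_p-\alpha H$ is symmetric with every eigenvalue of the form $1-\alpha\lambda$, $\lambda\in[ns,nl]$. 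Hence $\|I_p-\alpha H\|_2=\max_{\lambda\in[ns,nl]}|1-\alpha\lambda|=\max(|1-\alpha nl|,|1-\alpha ns|)=\eta$, and $\|\mb{z}_+-\ul{\mb{z}}^*\|_2\le\eta\|\mb{z}-\ul{\mb{z}}^*\|_2$ follows by submultiplicativity. The stated range $0<\alpha<\tfrac{2}{nl}$ is exactly what forces $\eta<1$, i.e.\ a genuine contraction; the inequality itself holds for every $\alpha>0$.

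The main obstacle is a regularity gap: the integral-of-Hessian representation presumes $f\in C^2$, whereas Assumption~\ref{asp1} only supplies a globally Lipschitz gradient. I would close this either by (i) invoking Rademacher's theorem, so that the Lipschitz gradient is differentiable almost everywhere along the segment, its a.e.\ Jacobian is symmetric with spectrum in $[ns,nl]$, and the fundamental theorem of calculus still produces an averaged $H$ obeying $ns\,I_p\preceq H\preceq nl\,I_p$, or (ii) by approximating $f$ by $C^2$ functions with the same constants and passing to the limit. I would deliberately avoid the purely first-order route of combining strong monotonicity $\langle\nabla f(\mb{z})-\nabla f(\ul{\mb{z}}^*),\mb{z}-\ul{\mb{z}}^*\rangle\ge ns\|\mb{z}-\ul{\mb{z}}^*\|_2^2$ with co-coercivity, since those two inequalities alone yield only the weaker factor $\sqrt{1-\tfrac{2\alpha(ns)(nl)}{ns+nl}}$ rather than the exact $\eta$ claimed; recovering the tight $\max(|1-\alpha nl|,|1-\alpha ns|)$ genuinely requires the eigenvalue picture. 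Since the lemma is quoted verbatim from Bubeck~\cite{opt_literature0}, citing that reference to absorb the $C^{1,1}$ technicalities is also fully legitimate.
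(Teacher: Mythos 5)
The paper offers no proof of Lemma~\ref{gd_approach} at all---it is imported verbatim from Bubeck~\cite{opt_literature0}---so there is no in-paper argument to compare against; judged on its own, your proof is correct. The fixed-point subtraction, the averaged-Hessian representation $\mb{z}_+-\ul{\mb{z}}^*=(I_p-\alpha H)(\mb{z}-\ul{\mb{z}}^*)$ with $ns\,I_p\preceq H\preceq nl\,I_p$, and the spectral evaluation $\|I_p-\alpha H\|_2=\max(|1-\alpha ns|,|1-\alpha nl|)=\eta$ constitute the standard argument, and you correctly flag the one genuine technicality: Assumption~\ref{asp1} only gives $f\in C^{1,1}$. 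Of your two patches, (ii) is the clean one---mollification preserves both constants, the smooth case gives the contraction $\|(\mb{z}-\alpha\nabla f_\delta(\mb{z}))-(\mb{y}-\alpha\nabla f_\delta(\mb{y}))\|_2\leq\eta\|\mb{z}-\mb{y}\|_2$ for \emph{all} pairs, and this survives $\delta\to 0$ by local uniform convergence of gradients; then set $\mb{y}=\ul{\mb{z}}^*$ (contract the map rather than the distance to the mollified minimizer, since the latter moves with $\delta$). Patch (i) as stated has a wrinkle: Rademacher gives differentiability of $\nabla f$ almost everywhere in $\mbb{R}^p$, and for $p\geq 2$ the segment from $\ul{\mb{z}}^*$ to $\mb{z}$ has Lebesgue measure zero, so ``a.e.\ along the segment'' needs a Fubini argument over parallel segments, or Alexandrov's theorem for convex functions; workable, but (ii) is preferable.

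One side remark in your proposal is factually wrong and worth correcting, precisely because it bears on the regularity issue: the purely first-order route \emph{does} recover the exact $\eta$. Expand $\|\mb{z}_+-\ul{\mb{z}}^*\|_2^2=\|\mb{z}-\ul{\mb{z}}^*\|_2^2-2\alpha\langle\nabla f(\mb{z}),\mb{z}-\ul{\mb{z}}^*\rangle+\alpha^2\|\nabla f(\mb{z})\|_2^2$ and apply the coercivity inequality (Bubeck's Lemma~3.11) $\langle\nabla f(\mb{z}),\mb{z}-\ul{\mb{z}}^*\rangle\geq\frac{ns\cdot nl}{ns+nl}\|\mb{z}-\ul{\mb{z}}^*\|_2^2+\frac{1}{ns+nl}\|\nabla f(\mb{z})\|_2^2$, leaving a residual term $\alpha\bigl(\alpha-\frac{2}{ns+nl}\bigr)\|\nabla f(\mb{z})\|_2^2$. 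When $\alpha\leq\frac{2}{ns+nl}$ this coefficient is nonpositive, so bounding $\|\nabla f(\mb{z})\|_2\geq ns\|\mb{z}-\ul{\mb{z}}^*\|_2$ (strong monotonicity plus Cauchy--Schwarz, using $\nabla f(\ul{\mb{z}}^*)=\mb{0}_p$) gives exactly $(1-\alpha ns)^2\|\mb{z}-\ul{\mb{z}}^*\|_2^2$; when $\alpha\geq\frac{2}{ns+nl}$ the coefficient is nonnegative, so $\|\nabla f(\mb{z})\|_2\leq nl\|\mb{z}-\ul{\mb{z}}^*\|_2$ gives $(1-\alpha nl)^2\|\mb{z}-\ul{\mb{z}}^*\|_2^2$; in both regimes the factor is at most $\eta^2$, with equality in the binding case. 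So the ``eigenvalue picture'' is not genuinely required, and this first-order argument needs no smoothness beyond Assumption~\ref{asp1}, sidestepping the $C^2$ repair work entirely---arguably the cleaner proof to have written, though your Hessian route, with patch (ii) or the citation fallback, is sound.
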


\section{Convergence Analysis}\label{s5}
We now provide the proof of Lemma \ref{thm1}. We will bound~$\|\mb{x}_k-Y_\infty\overline{\mb{x}}_k\|$,~$\|\overline{\mb{x}}_k-\mb{z}^*\|_2$, and~$\|\mb{w}_k-Y_\infty\mb{g}_k\|$, linearly in terms of their past values, i.e.,~$\|\mb{x}_{k-1}-Y_\infty\overline{\mb{x}}_{k-1}\|$,~$\|\overline{\mb{x}}_{k-1}-\mb{z}^*\|_2$, and~$\|\mb{w}_{k-1}-Y_\infty\mb{g}_{k-1}\|$, as well as~$\|\mb{x}_{k-1}\|_2$. The coefficients are the entries of~$G$ and~$H_{k-1}$.

\textbf{Step 1:} Bound~$\|\mb{x}_k-Y_\infty\overline{\mb{x}}_k\|$. \\
According to Eq.~\eqref{alg1_ma} and Lemma \ref{w-x-}(b), we obtain that
\begin{align}
\left\|\mb{x}_k-Y_\infty\overline{\mb{x}}_k\right\|\leq&\left\|A\mb{x}_{k-1}-Y_\infty\overline{\mb{x}}_{k-1}\right\|\nonumber\\
&+\alpha\left\|\mb{w}_{k-1}-Y_\infty\mb{g}_{k-1}\right\|.
\end{align}
Noticing that~$\|A\mb{x}_{k-1}-Y_\infty\overline{\mb{x}}_{k-1}\|\leq\sigma\|\mb{x}_{k-1}-Y_\infty\overline{\mb{x}}_{k-1}\|$ from Eq. \eqref{sigma_eq}, we have
\begin{align}\label{step1}
\left\|\mb{x}_k-Y_\infty\overline{\mb{x}}_k\right\|\leq&\sigma\left\|\mb{x}_{k-1}-Y_\infty\overline{\mb{x}}_{k-1}\right\|\nonumber\\
&+\alpha\left\|\mb{w}_{k-1}-Y_\infty\mb{g}_{k-1}\right\|.
\end{align}

\textbf{Step 2:} Bound~$\|\overline{\mb{x}}_k-\mb{z}^*\|_2$. \\
By considering Lemma \ref{w-x-}(b), we obtain that
\begin{align}
\overline{\mb{x}}_k=\left[\overline{\mb{x}}_{k-1}-\alpha\mb{h}_{k-1}\right]-\alpha\left[\mb{g}_{k-1}-\mb{h}_{k-1}\right].
\end{align}
Let~$\mb{x}_+=\overline{\mb{x}}_{k-1}-\alpha\mb{h}_{k-1}$, which is a (centralized) gradient-descent step with respect to the global objective function in Problem P1. Therefore, from Lemma~\ref{gd_approach},
\begin{align}
\left\|\mb{x}_+-\mb{z}^*\right\|_2\leq\eta\left\|\overline{\mb{x}}_{k-1}-\mb{z}^*\right\|_2.
\end{align}
From the Lipschitz-continuity, Assumption \ref{asp1}(a), we obtain
{\color{black}
\begin{align}
\left\|\mb{g}_{k-1}-\mb{h}_{k-1}\right\|_2&\leq\left\|\frac{1}{n}(\mb{1}_n\mb{1}_n^\top)\otimes I_p
\right\|_2l\left\|\mb{z}_{k-1}-\overline{\mb{x}}_{k-1}\right\|_2.
\end{align}
}
Therefore, it follows that
\begin{align}\label{s2_1}
\left\|\overline{\mb{x}}_k-\mb{z}^*\right\|_2&\leq\left\|\mb{x}_+-\mb{z}^*\right\|_2+\alpha\left\|\mb{g}_{k-1}-\mb{h}_{k-1}\right\|_2,\nonumber\\
&\leq\eta\left\|\overline{\mb{x}}_{k-1}-\mb{z}^*\right\|_2+\alpha l\left\|\mb{z}_{k-1}-\overline{\mb{x}}_{k-1}\right\|_2.
\end{align}
From Eq.~\eqref{alg1_mc} and Lemma \ref{yy-}(a), it follows that
{\color{black}
\begin{align}\label{s2_2}
\left\|\mb{z}_{k-1}-\overline{\mb{x}}_{k-1}\right\|_2
\leq&\left\|Y_{k-1}^{-1}\left(\mb{x}_{k-1}-Y_\infty\overline{\mb{x}}_{k-1}\right)\right\|_2\nonumber\\
&+\left\|\left(Y_{k-1}^{-1}Y_\infty-I_{np}\right)\overline{\mb{x}}_{k-1}\right\|_2,\nonumber\\
\leq&y_-\left\|\mb{x}_{k-1}-Y_\infty\overline{\mb{x}}_{k-1}\right\|_2\nonumber\\
&+y_-T\gamma_1^{k-1}\left\|\mb{x}_{k-1}\right\|_2,
\end{align}
}where in the second inequality we also make use of the relation~$\|\overline{\mb{x}}_{k-1}\|_2\leq\|\mb{x}_{k-1}\|_2$.
By substituting Eq.~\eqref{s2_2} into Eq.~\eqref{s2_1}, we obtain that
\begin{align}\label{step2}
\left\|\overline{\mb{x}}_k-\mb{z}^*\right\|_2\leq&\alpha cly_-\left\|\mb{x}_{k-1}-Y_\infty\overline{\mb{x}}_{k-1}\right\|+\eta\left\|\overline{\mb{x}}_{k-1}-\mb{z}^*\right\|_2\nonumber\\
&+\alpha ly_-T\gamma_1^{k-1}\left\|\mb{x}_{k-1}\right\|_2.
\end{align}

\textbf{Step 3:} Bound~$\|\mb{w}_k-Y_\infty\mb{g}_k\|$. \\
According to Eq.~\eqref{alg1_md}, we have
\begin{align}
\left\|\mb{w}_k-Y_\infty\mb{g}_k\right\|\leq&\left\|A\mb{w}_{k-1}-Y_\infty\mb{g}_{k-1}\right\|\nonumber\\
&+\left\|\left(\nabla \mb{f}_k-\nabla \mb{f}_{k-1}\right)-\left(Y_\infty\mb{g}_k-Y_\infty\mb{g}_{k-1}\right)\right\|.\nonumber
\end{align}
With Lemma \ref{w-x-}(a) and Eq. \eqref{sigma_eq}, we obtain that
\begin{align}
\left\|A\mb{w}_{k-1}-Y_\infty\mb{g}_{k-1}\right\|&=\left\|A\mb{w}_{k-1}-Y_\infty\overline{\mb{w}}_{k-1}\right\|,\nonumber\\
&\leq\sigma\left\|\mb{w}_{k-1}-Y_\infty\overline{\mb{w}}_{k-1}\right\|.
\end{align}
It follows from the definition of~$\mb{g}_k$ that
{\color{black}
\begin{align}
&\left\|\left(\nabla \mb{f}_k-\nabla \mb{f}_{k-1}\right)-\left(Y_\infty\mb{g}_k-Y_\infty\mb{g}_{k-1}\right)\right\|_2\nonumber\\
&=\left\|\left(I_{np}-\frac{1}{n}Y_\infty(\mb{1}_n\otimes I_p)(\mb{1}_n^\top\otimes I_p)\right)\left(\nabla\mb{f}_k-\nabla\mb{f}_{k-1}\right)\right\|_2.
\end{align}
}
Since~$\frac{1}{n}Y_\infty(\mb{1}_n\otimes I_p)(\mb{1}_n^\top\otimes I_p)=A_\infty$, we obtain that
\begin{align}
\left\|\left(\nabla \mb{f}_k-\nabla \mb{f}_{k-1}\right)-\left(Y_\infty\mb{g}_k-Y_\infty\mb{g}_{k-1}\right)\right\|_2\leq&\epsilon l\left\|\mb{z}_k-\mb{z}_{k-1}\right\|_2,\nonumber
\end{align}
where we use the Lipschitz-continuity, Assumption \ref{asp1}(a). Therefore, we have
\begin{align}\label{s3_1}
\left\|\mb{w}_k-Y_\infty\mb{g}_k\right\|\leq&\sigma\left\|\mb{w}_{k-1}-Y_\infty\mb{g}_{k-1}\right\|\nonumber\\
&+d\epsilon l\left\|\mb{z}_k-\mb{z}_{k-1}\right\|_2.
\end{align}
We now bound~$\|\mb{z}_k-\mb{z}_{k-1}\|_2$. Note that
{\color{black}
\begin{align}
\left\|\mb{h}_{k-1}\right\|_2&=\left\|\frac{1}{n}(\mb{1}_n\otimes I_p)(\mb{1}_n^\top\otimes I_p)\nabla\mb{f}(\overline{\mb{x}}_{k-1})\right\|_2 \nonumber\\
&\leq l\left\|\overline{\mb{x}}_{k-1}-\mb{z}^*\right\|_2.
\end{align}
}
As a result, we have
\begin{align}
\left\|Y_k^{-1}\mb{w}_{k-1}\right\|_2\leq&\left\|Y_k^{-1}\left(\mb{w}_{k-1}-Y_\infty\mb{g}_{k-1}\right)\right\|_2
\nonumber\\
&+\left\|Y_k^{-1}Y_\infty\mb{h}_{k-1}\right\|_2\nonumber\\
&+\left\|Y_k^{-1}Y_\infty\left(\mb{g}_{k-1}-\mb{h}_{k-1}\right)\right\|_2,\nonumber\\
\leq&y_-\left\|\mb{w}_{k-1}-Y_\infty\mb{g}_{k-1}\right\|_2 \nonumber\\
&+y_-yl\left\|\overline{\mb{x}}_{k-1}-\mb{z}^*\right\|_2\nonumber\\
&+y_-yl\left\|\mb{z}_{k-1}-\overline{\mb{x}}_{k-1}\right\|_2,\nonumber\\
\leq&y_-\left\|\mb{w}_{k-1}-Y_\infty\mb{g}_{k-1}\right\|_2 \nonumber\\
&+y_-yl\left\|\overline{\mb{x}}_{k-1}-\mb{z}^*\right\|_2\nonumber\\
&+y_-^2yl\left\|\mb{x}_{k-1}-Y_\infty\overline{\mb{x}}_{k-1}\right\|_2\nonumber\\
&+y_-^2ylT\gamma_1^{k-1}\left\|\mb{x}_{k-1}\right\|_2,
\end{align}
where the last inequality holds due to Eq.~\eqref{s2_2}.
With the upper bound of~$\|Y_k^{-1}\mb{w}_{k-1}\|_2$ provided in the preceding relation and {\color{black}the equality that~$(A-I_{np})Y_\infty\overline{\mb{x}}_{k-1}=\mb{0}_n$,} we can bound~$\|\mb{z}_k-\mb{z}_{k-1}\|_2$ as follows.
\begin{align}
\left\|\mb{z}_k-\mb{z}_{k-1}\right\|_2\leq&\left\|Y_k^{-1}\left(\mb{x}_{k}-\mb{x}_{k-1}\right)\right\|_2\nonumber\\
&+\left\|\left(Y_k^{-1}-Y_{k-1}^{-1}\right)\mb{x}_{k-1}\right\|_2,\nonumber\\
\leq&\left\|Y_k^{-1}\left(A-I_{np}\right)\mb{x}_{k-1}\right\|_2+\alpha\left\|Y_k^{-1}\mb{w}_{k-1}\right\|_2\nonumber\\
&+\left\|Y_k^{-1}-Y_{k-1}^{-1}\right\|_2\left\|\mb{x}_{k-1}\right\|_2,\nonumber\\
\leq&(y_-\tau+\alpha y_-^2yl)\left\|\mb{x}_{k-1}-Y_\infty\overline{\mb{x}}_{k-1}\right\|_2\nonumber\\
&+\alpha y_-\left\|\mb{w}_{k-1}-Y_\infty\mb{g}_{k-1}\right\|_2\nonumber\\
&+\alpha y_-yl\left\|\overline{\mb{x}}_{k-1}-\mb{z}^*\right\|_2\nonumber\\
&+(\alpha yl+2)y_-^2T\gamma_1^{k-1}\left\|\mb{x}_{k-1}\right\|_2.\label{s3_2}
\end{align}
By substituting Eq.~\eqref{s3_2} in Eq.~\eqref{s3_1}, we obtain that
\begin{align}\label{step3}
\left\|\mb{w}_k-Y_\infty\mb{g}_k\right\|\leq&(cd\epsilon l\tau y_-+\alpha cd\epsilon l^2yy_-^2)\left\|\mb{x}_{k-1}-Y_\infty\overline{\mb{x}}_{k-1}\right\|\nonumber\\
&+\alpha d\epsilon l^2yy_-\left\|\overline{\mb{x}}_{k-1}-\mb{z}^*\right\|_2\nonumber\\
&+(\sigma+\alpha cd\epsilon ly_-)\left\|\mb{w}_{k-1}-Y_\infty\mb{g}_{k-1}\right\|\nonumber\\
&+(\alpha yl+2)d\epsilon ly_-^2T\gamma_1^{k-1}\left\|\mb{x}_{k-1}\right\|_2.
\end{align}

\textbf{Step 4:} By combining Eqs.~\eqref{step1} in step 1, \eqref{step2} in step 2, and \eqref{step3} in step 3, we complete the proof.
\section{Numerical Experiments}\label{s6}
In this section, we analyze the performance of ADD-OPT. Our numerical experiments are based on the distributed logistic regression problem over a directed graph:
\begin{align}
\mb{z}^*=\underset{\mb{z}\in\mbb{R}^p}{\operatorname{argmin}}\left(\frac{\beta}{2}\|\mb{z}\|_2^2+\sum_{i=1}^n\sum_{j=1}^{m_i}\ln\left[1+\exp\left(-b_{ij}\mb{c}_{ij}^\top\mb{z}\right)\right]\right).\nonumber
\end{align}
Each agent~$i$ has access to~$m_i$ training examples,~$(\mb{c}_{ij},b_{ij})\in\mbb{R}^p\times\{-1,+1\}$, where~$\mb{c}_{ij}$ includes the~$p$ features of the~$j$th training example of agent~$i$ and~$b_{ij}$ is the corresponding label. This problem can be formulated in the form of Problem P1 with the local objective function,~$f_i$, being
\begin{align}
f_i=\frac{\beta}{2n}\|\mb{z}\|_2^2+\sum_{j=1}^{m_i}\ln\left[1+\exp\left(-\left(\mb{c}_{ij}^\top\mb{z}\right)b_{ij}\right)\right].\nonumber
\end{align}
In our setting, we have~$n=10$,~$m_i=10$, for all~$i$, and~$p=3$.
\subsection{Convergence rate}
In our first experiment, we compare the convergence rate of algorithms that solve the above distributed consensus optimization problem over directed graphs, including ADD-OPT, DEXTRA,~\cite{DEXTRA}, Gradient-Push,~\cite{opdirect_Nedic}, Directed-Distributed Gradient Descent,~\cite{D-DGD}, and the Weight Balanced-Distributed Gradient Descent,~\cite{opdirect_Makhdoumi}. The network topology is described in Fig. \ref{fig1}, where we apply the weighting strategy from Eq.~\eqref{aa}.
\begin{figure}[!h]
	\begin{center}
		\noindent
		\includegraphics[width=2in]{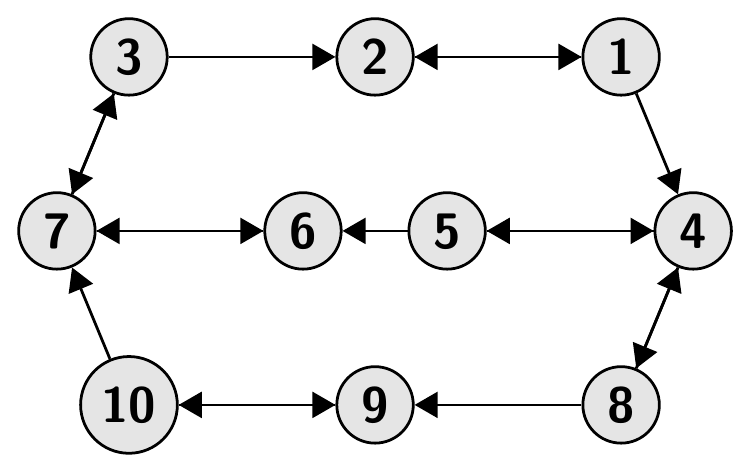}
		\caption{A strongly-connected directed network.}\label{fig1}
	\end{center}
\end{figure} 
The step-size used in Gradient-Push, Directed-Distributed Gradient Descent, and Weight Balanced-Distributed Gradient Descent is~$\alpha_k=1/\sqrt{k}$. The constant step-size used in DEXTRA and ADD-OPT is~$\alpha=0.3$. The convergence rates for these algorithms are shown in Fig.~\ref{fig2}. It shows that ADD-OPT and DEXTRA have a fast linear convergence rate, while other methods are sub-linear. 
\begin{figure}[!h]
	\begin{center}
		\noindent
		\includegraphics[width=3.5in]{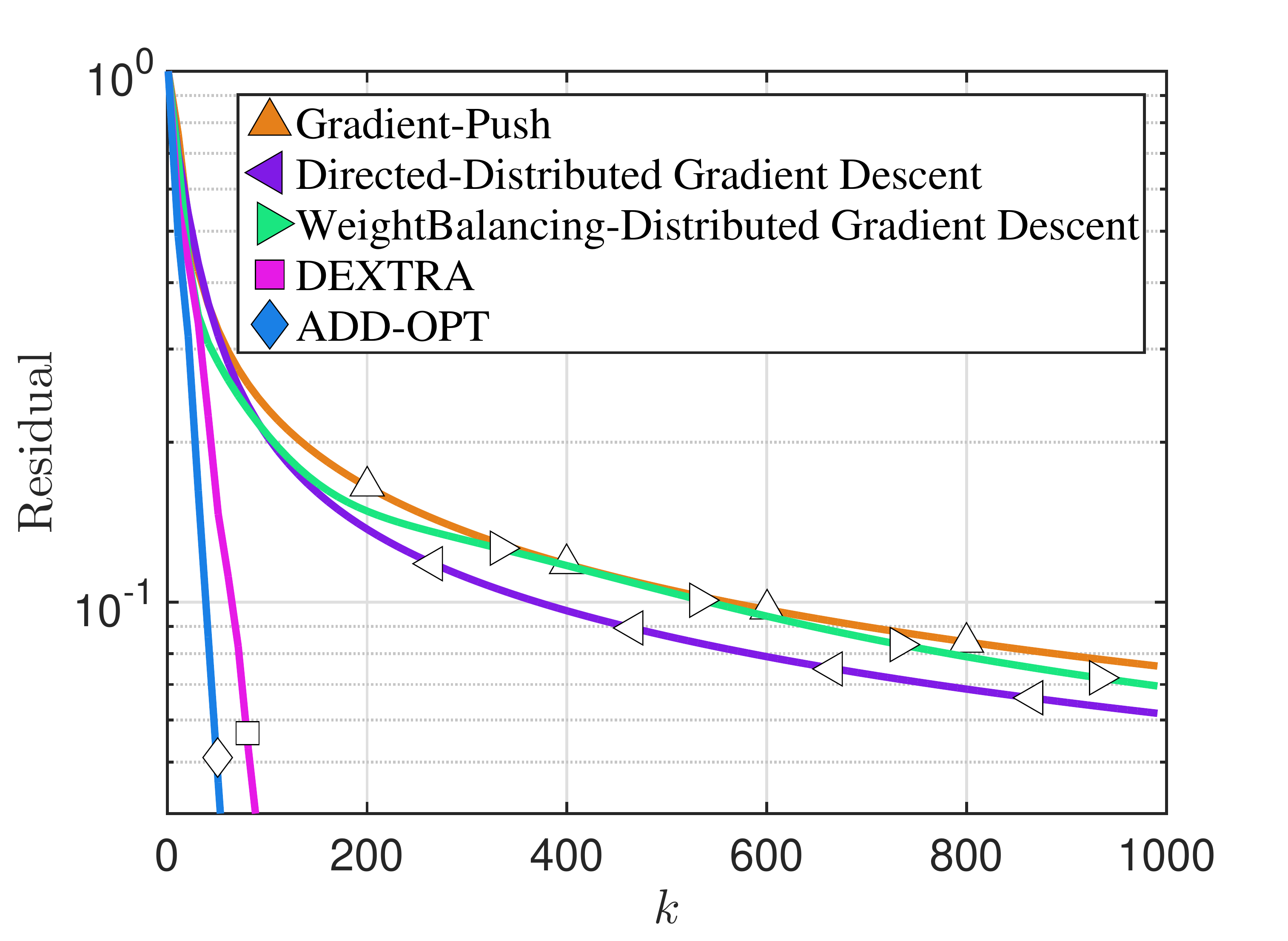}
		\caption{Convergence rates comparison over directed networks.}\label{fig2}
	\end{center}
\end{figure}

\subsection{Step-size range}
We now compare ADD-OPT and DEXTRA in terms of their step-size ranges again with the weighting strategy from Eq.~\eqref{aa}. It is shown in Fig. \ref{fig3} that the greatest lower bound of DEXTRA is around~$\underline{\alpha}=0.2$. In contrast, ADD-OPT works for a sufficiently small step-size. In the given setting, we have~$\tau=1.25$,~$\epsilon=1.11$,~$y=1.96$,~$y_-=2.2$,~$l=1$, and~$\sigma<1$; resulting into~$\overline{\alpha}=\frac{\sqrt{8.7}}{9.57}$, where we choose~$c$ and~$d$ to be~$1$. It can be found in Fig. \ref{fig4} that the practical upper bound of step-size is much bigger, i.e.,~$\overline{\alpha}=1.12$. Since the computation of~$\overline{\alpha}$ is related to the global knowledge, e.g., the network topology, and the strong-convexity and Lipschitz-continuity constants, it is preferable to estimate~$\overline{\alpha}$. According to Eq.~\eqref{alpha_ub}, we have that 
$\overline{\alpha}\approxeq\sqrt{\frac{s(1-\sigma)^2}{\epsilon yy_-^2(l+s)l^2}}$ given that~$\epsilon y(l+s)s(1-\sigma)^2\gg(\epsilon\tau s)^2$.
By estimating~$\tau=\epsilon=y=y_-=1$,~$\sigma=0.9$, and noting that~$s\leq l$, we can estimate~$\overline{\alpha}$ as~$\overline{\alpha}\approxeq\frac{1}{10l}$.
\begin{figure}[!h]
	\begin{center}
		\noindent
		\includegraphics[width=3.5in]{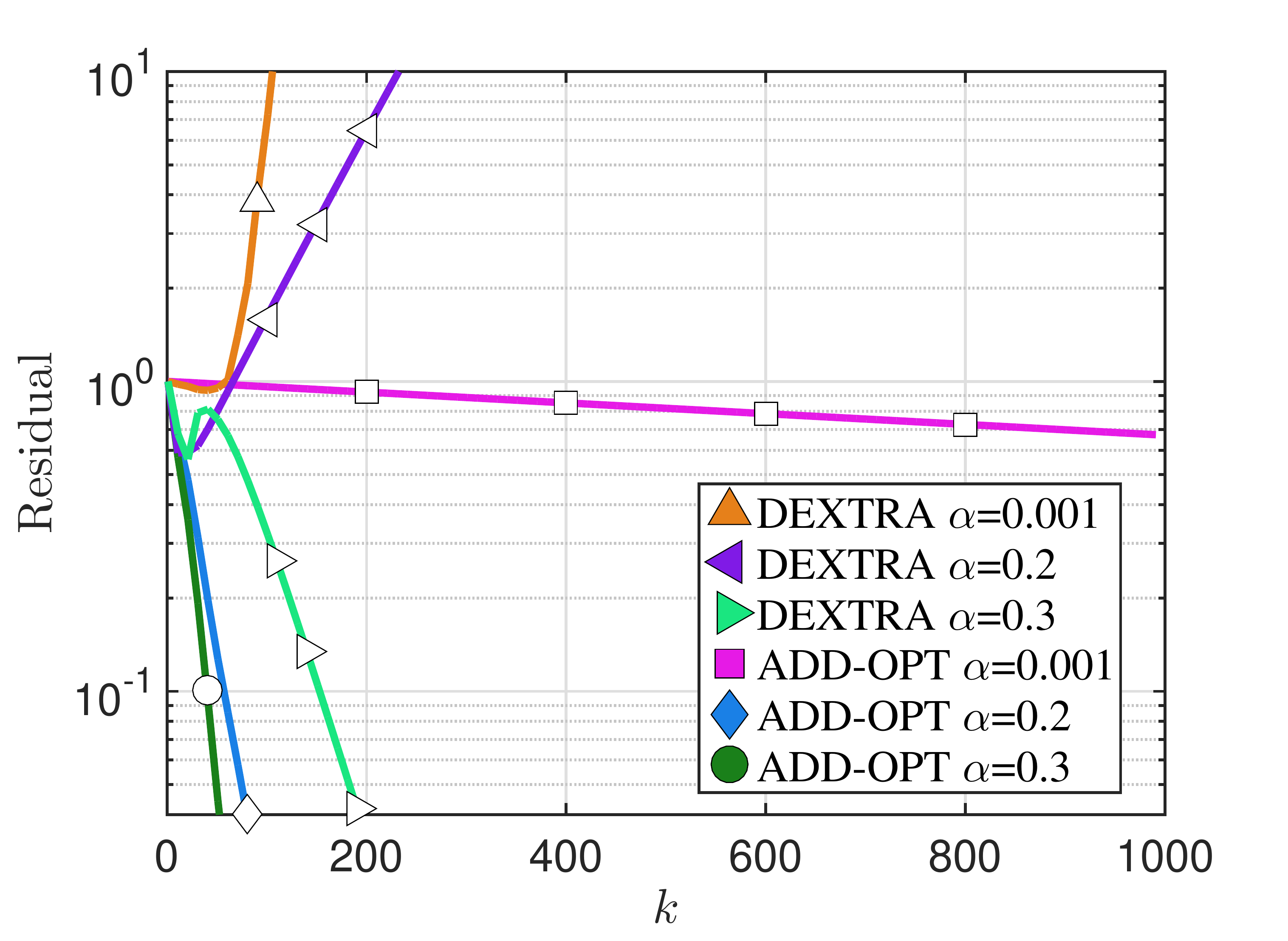}
		\caption{Comparison between ADD-OPT and DEXTRA in terms of step-sizes.}\label{fig3}
	\end{center}
\end{figure}

\subsection{Convergence rate vs. step-sizes}
We note that the convergence rate of ADD-OPT is related to the spectral radius of matrix~$G$, i.e.,~$\rho(G)$, see Eq.~\eqref{thm1_eq}. Therefore, it is possible to achieve the best convergence rate by picking some~$\alpha$ such that the~$\rho(G)$ is minimized. 
\begin{figure}[!h]
	\begin{center}
		\noindent
		\includegraphics[width=3.5in]{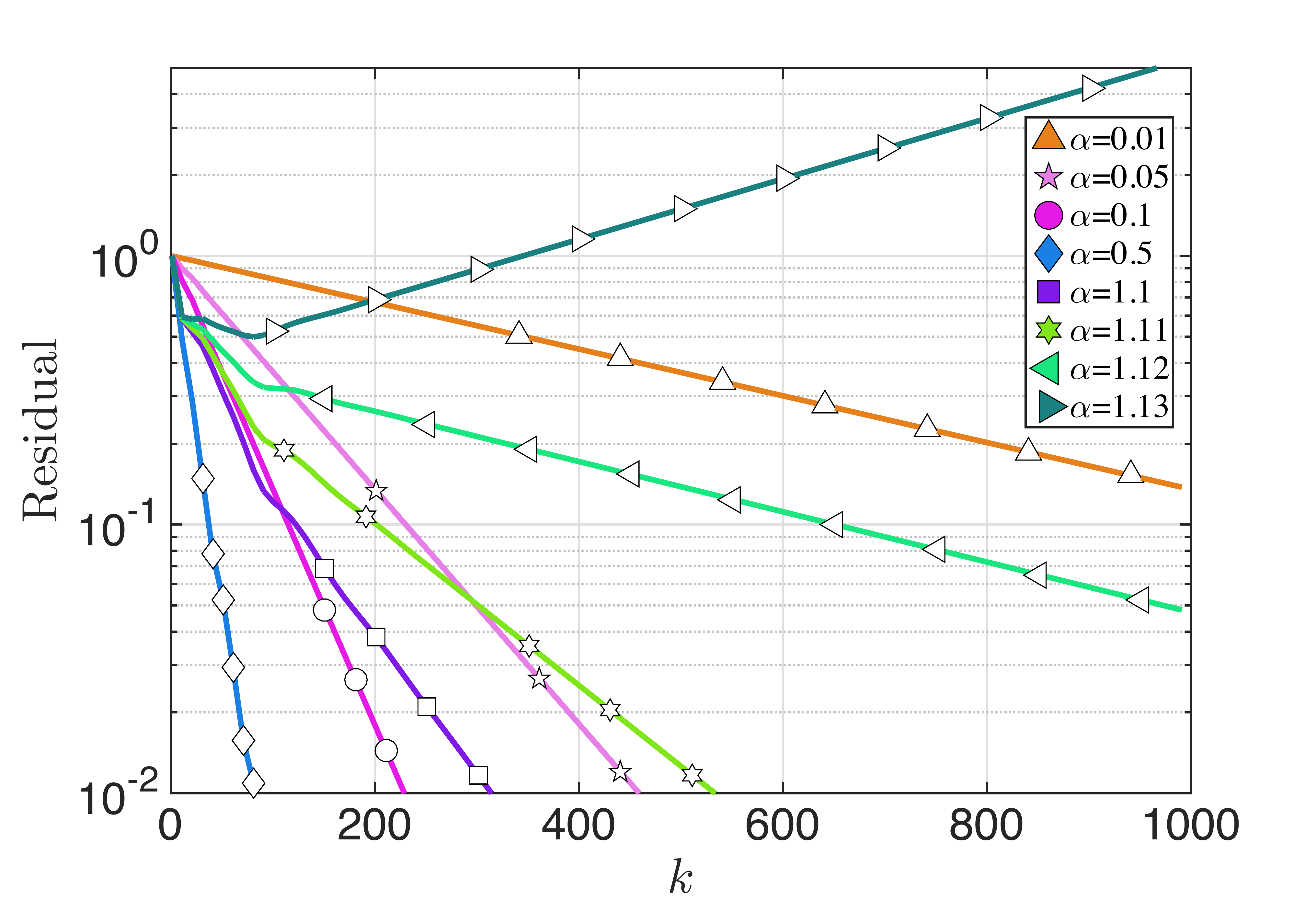}
		\caption{The range of  ADD-OPT 's step-size.}\label{fig4}
	\end{center}
\end{figure}
In Fig. \ref{fig5}, we show the relationship between the spectral radius,~$\rho(G_\alpha)$, of~$G$, and the step-size,~$\alpha$, as well as the residual at the $200$-th iteration,~$\frac{\|\mathbf{z}_{200}-\mathbf{z}^*\|}{\|\mathbf{z}_0-\mathbf{z}^*\|}$, and~$\alpha$. We observe that the best convergence rate is achieved when~$\alpha=0.3$, at which~$\rho(G)$ is minimized. Fig.~\ref{fig5} also demonstrates our previous theoretical analysis in Lemma \ref{lem_G}, where we show that~$\rho(G)=1$, when~$\alpha=0$ or~$\alpha=\overline{\alpha}$, and~$\rho(G)<1$ for~$\alpha\in(0,\overline{\alpha})$. We further note that~$\rho(G_\alpha)<1$, when~$\alpha$ lies approximately in~$(0,0.3)$, which is our theoretical bound of the step-size.
\begin{figure}[!h]
	\begin{center}
		\noindent
		\includegraphics[width=3.5in]{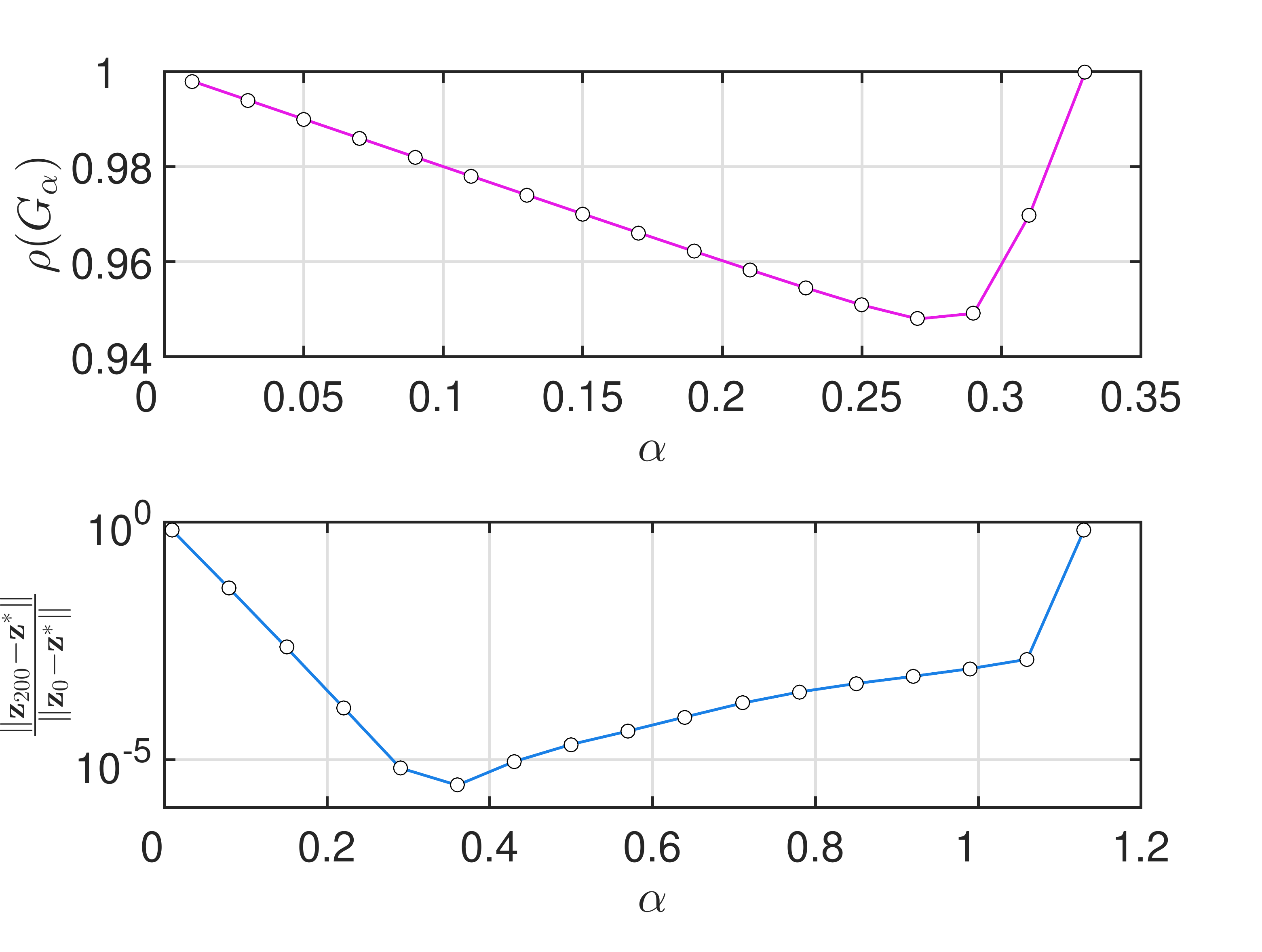}
		\caption{Spectral radius,~$\rho(G_\alpha)$ and the residual at the~$200$th iteration versus~$\alpha$.}\label{fig5}
	\end{center}
\end{figure}

\subsection{Convergence rate vs graph sparsity}
In our last experiment, we observe how does the convergence rate change as a function of the sparsity of the directed graph. We consider three strongly-connected directed graphs as shown in Fig.~\ref{graph}. It can be observed that the residuals decrease faster as the number of edges increases, from~$\mc{G}_a$ to~$\mc{G}_b$ to~$\mc{G}_c$, see Fig.~\ref{fig6}. This indicates faster convergence when there are more communication channels available for information exchange.
\begin{figure}[!h]
\centering
\subfigure{\includegraphics[width=2in]{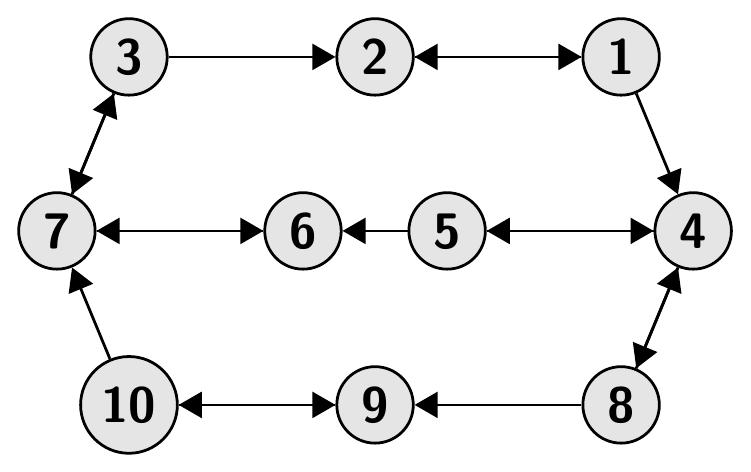}}
\subfigure{\includegraphics[width=2in]{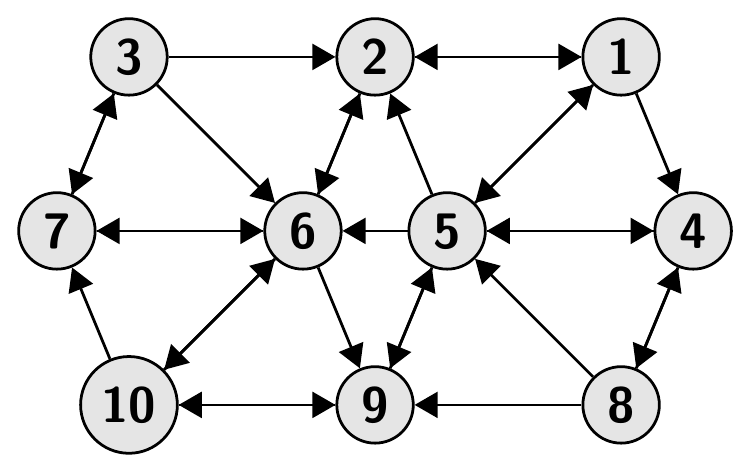}}
\subfigure{\includegraphics[width=2in]{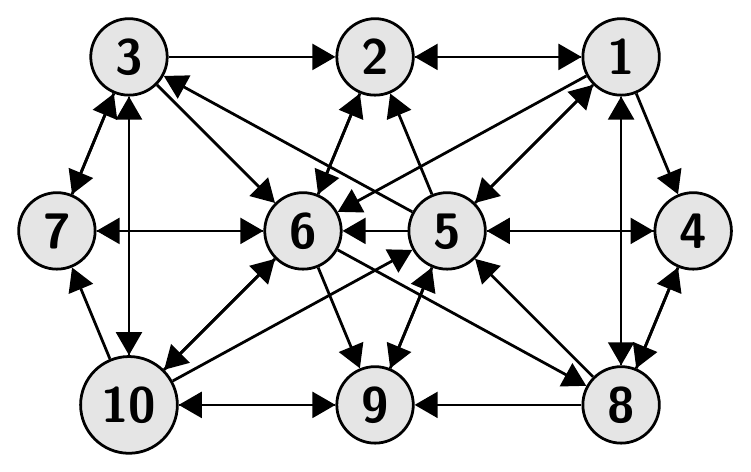}}
\caption{Three examples of strongly-connected directed graphs.}
\label{graph}
\end{figure}

\section{Conclusions}\label{s7}
In this paper, we focus on solving the distributed optimization problem over directed graphs. The proposed algorithm, termed ADD-OPT (Accelerated Distributed Directed Optimization), can be viewed as an improvement of our recent work, DEXTRA. The proposed algorithm, ADD-OPT, achieves the best known rate of convergence for this class of problems,~$O(\mu^{k}),0<\mu<1$, given that the objective functions are strongly-convex with globally Lipschitz-continuous gradients, where~$k$ is the number of iterations. Moreover, ADD-OPT supports a wider and more realistic range of step-sizes in contrast to the existing work. In particular, we show that ADD-OPT converges for arbitrarily small (positive) step-sizes. Simulations further illustrate our results.
\begin{figure}[!h]
	\begin{center}
		\noindent
		\includegraphics[width=3.5in]{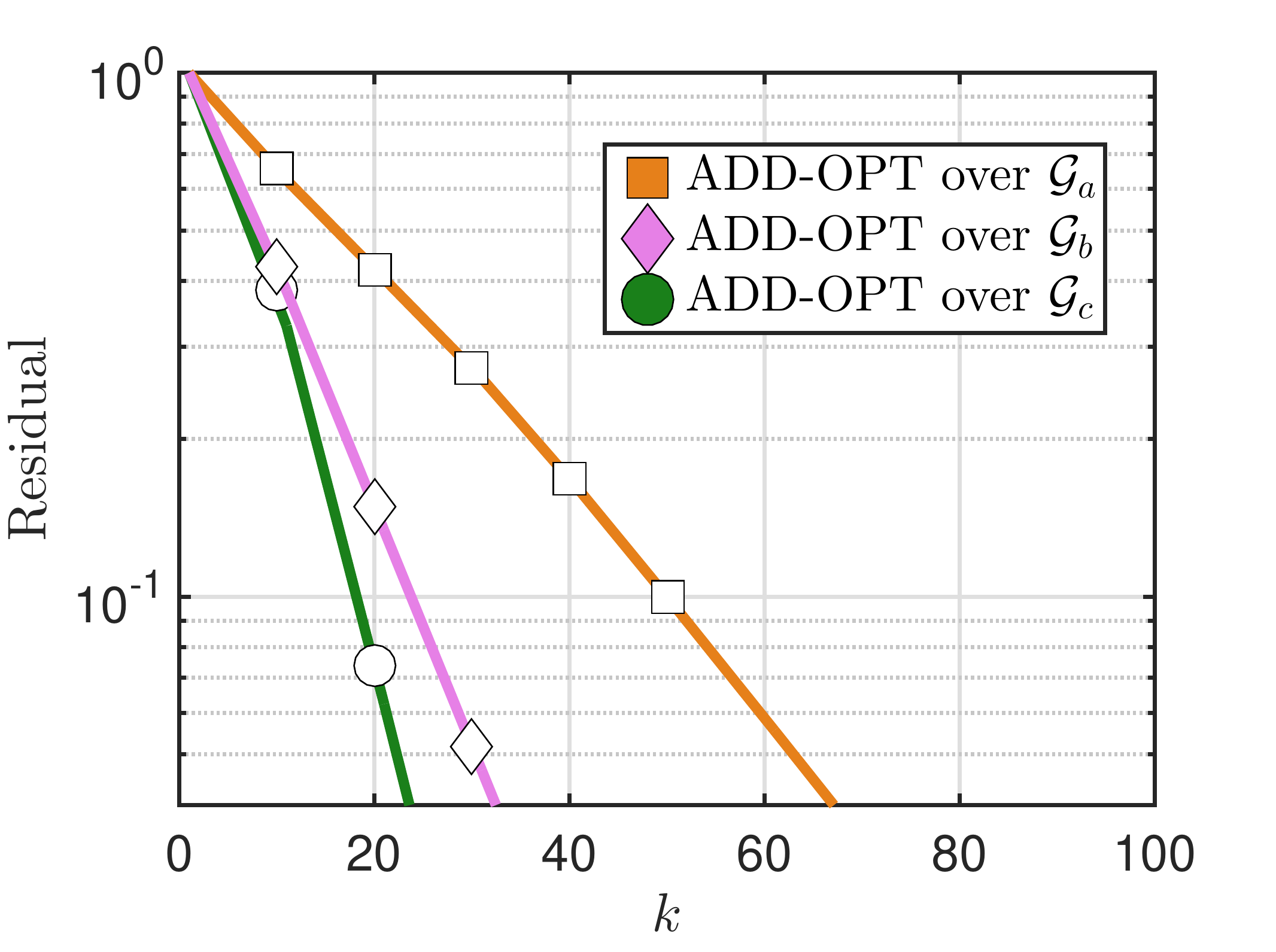}
		\caption{The range of  ADD-OPT 's step-size.}
		\label{fig6}
	\end{center}
\end{figure}

\bibliographystyle{IEEEbib}
\bibliography{sample}

\begin{IEEEbiography}[{\includegraphics[width=1in,height=1.25in,clip,keepaspectratio]{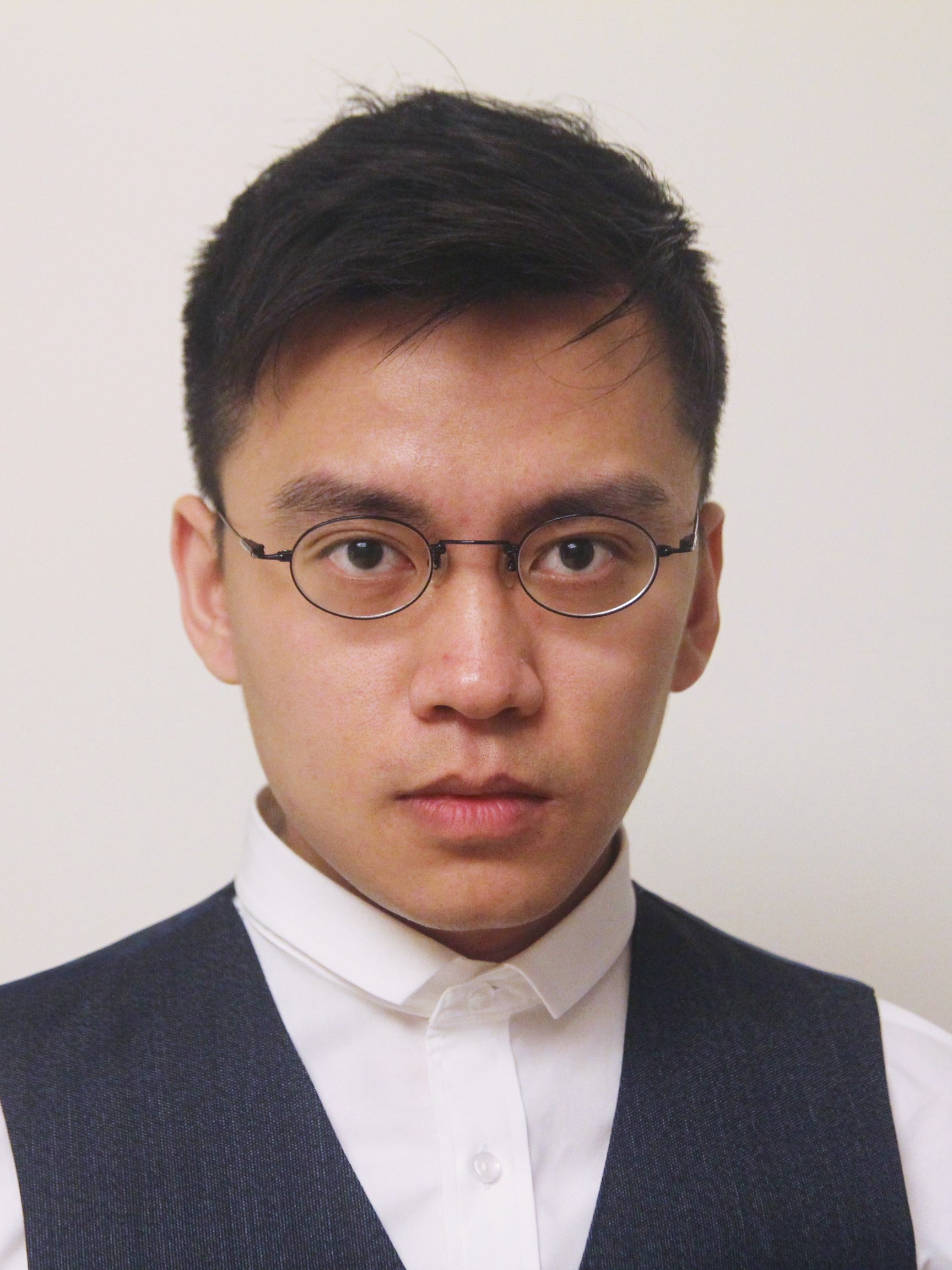}}]{Chenguang Xi} received his B.S. degree in Microelectronics from Shanghai JiaoTong University, China, in 2010, M.S. and Ph.D. degrees in Electrical and Computer Engineering from Tufts University, in 2012 and 2016, respectively. His research interests include distributed optimization, tensor analysis, and source localization.
\end{IEEEbiography}

\begin{IEEEbiography}[{\includegraphics[width=1in,height=1.25in,clip,keepaspectratio]{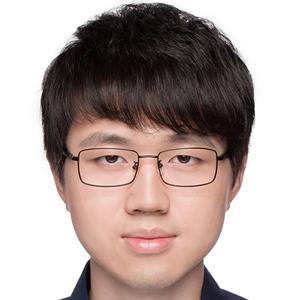}}]{Ran Xin} received his B.S. degree in Mathematics and Applied Mathematics from Xiamen University, China, in 2016. His research interests include distributed optimization and control.
\end{IEEEbiography}

\begin{IEEEbiography}[{\includegraphics[width=1in,height=1.25in,clip,keepaspectratio]{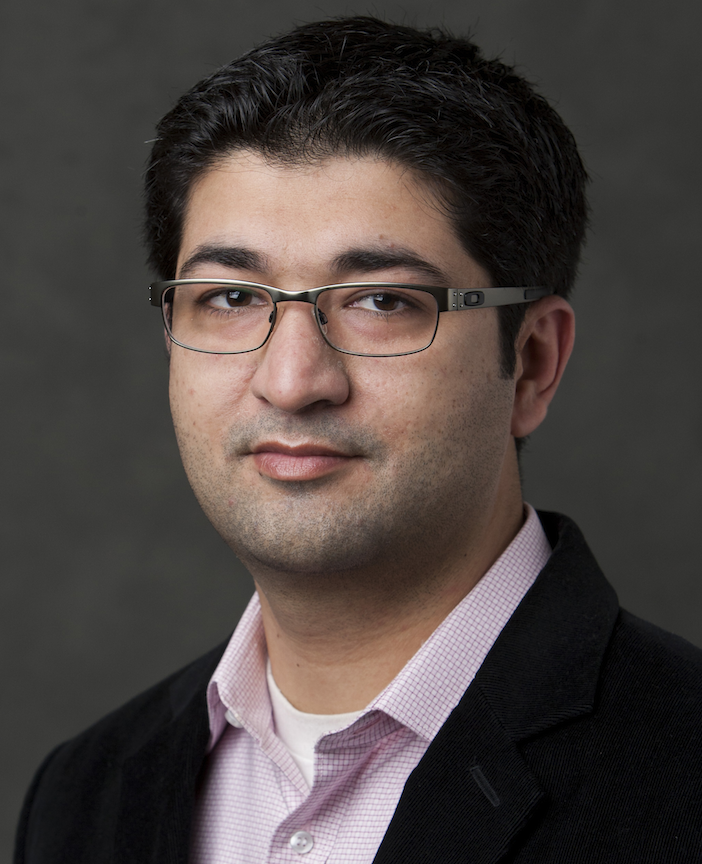}}]{Usman A. Khan} received his B.S. degree (with honors) in EE from University of Engineering and Technology, Lahore-Pakistan, in 2002, M.S. degree in ECE from University of Wisconsin-Madison in 2004, and Ph.D. degree in ECE from Carnegie Mellon University in 2009. Currently, he is an Assistant Professor with the ECE Department at Tufts University. He received the NSF Career award in Jan. 2014 and is an IEEE Senior Member since Feb. 2014. His research interests lie in efficient operation and planning of complex infrastructures and include statistical signal processing, networked control and estimation, and distributed algorithms. Dr. Khan is on the editorial board of IEEE Transactions on Smart Grid and an associate member of Sensor Array and Multichannel Technical Committee with the IEEE Signal Processing Society. He has served on the Technical Program Committees of several IEEE conferences and has organized and chaired several IEEE workshops and sessions. His graduate students have won multiple Best Student Paper awards. His work was presented as Keynote speech at BiOS SPIE Photonics West--Nanoscale Imaging, Sensing, and Actuation for Biomedical Applications IX.
\end{IEEEbiography}

\end{document}